\definecolor{darkgreen}{rgb}{0.0, 0.7, 0.0}
\newenvironment{LG}{\noindent \color{darkgreen}{\bf LG:} \footnotesize}{}
\newenvironment{VS}{\noindent \color{blue}{\bf VS:} \footnotesize}{}
\newenvironment{REF}{\noindent \color{red}{\bf Referee:}\footnotesize}{}
\newenvironment{pf}{\proof[\proofname]}{\endproof}
\newenvironment{pf*}[1]{\proof[#1]}{\endproof}
\newtheorem{thm}{Theorem}
\newtheorem{lem}[thm]{Lemma}
\newtheorem{cor}[thm]{Corollary}
\theoremstyle{definition}
\newtheorem{defn}[thm]{Definition}
\newtheorem{rem}[thm]{Remark}
\newcommand{\thmref}[1]{Theorem~\ref{#1}}
\newcommand{\corref}[1]{Corollary~\ref{#1}}
\newcommand{\proj}{{\mathbb P}}
\newcommand{\CP}{\proj}
\newcommand{\ve}{\varepsilon}
\newcommand{\res}{\mathop{\text{\rm res}}}
\newcommand{\Coeff}{\mathop{\text{\rm Coeff}}}
\newcommand{\Res}{\operatornamewithlimits{Res}}
\newcommand{\sgn}{\mathop{\text{\rm sgn}}}
\newcommand{\A}{\mathbb{A}}
\newcommand{\C}{\mathbb{C}}
\renewcommand{\H}{\mathbb{H}}
\newcommand{\K}{\mathbb{K}}
\renewcommand{\L}{\mathbb{L}}
\renewcommand{\P}{\mathbb{P}}
\newcommand{\Q}{\mathbb{Q}}
\newcommand{\R}{\mathbb{R}}
\newcommand{\X}{\mathbb{X}}
\newcommand{\Z}{\mathbb{Z}}
\newcommand{\cC}{\mathcal{C}}
\newcommand{\mM}{\mathcal{M}}
\newcommand{\oO}{\mathcal{O}}
\def\<{\langle}
\def\>{\rangle}
\def\Syst{{\mathcal P}}
\begin{document}
\title[$\chi_y$-genera of relative Hilbert schemes]{The $\chi_{-y}$-genera of relative Hilbert schemes \\ for linear systems on Abelian and K3 surfaces}

\author{Lothar G\"ottsche}
\author{Vivek Shende}

\begin{abstract}
For an ample line bundle on an Abelian or K3 surface, minimal with respect to the polarization, the relative Hilbert scheme of points on the complete
linear system is known to be smooth.  We give an explicit expression in quasi-Jacobi forms for the $\chi_{-y}$ genus of the restriction of the Hilbert scheme to 
a general linear subsystem.
This generalizes a result of Yoshioka and Kawai for the complete linear system on the K3 surface, a result of Maulik, Pandharipande, and Thomas
on the Euler characteristics of linear subsystems on the K3 surface, and  a conjecture of the authors.   
\end{abstract}

\maketitle

\vspace{2mm}

\section{Introduction}

Let $S$ be a smooth complex algebraic surface, and $L$ a line bundle over $S$.  Consider a linear system $\P^\delta \subset |L|$.  Let
$\cC \to \P^\delta$ the universal family of curves over the linear system, and $\cC^{[n]}$ the relative Hilbert schemes of points
on the fibres.  Under suitable hypotheses the Euler numbers of the $\cC^{[n]}$ control the number
$\delta$-nodal curves in $\P^\delta$ \cite{KST, KS}.  In these cases, the relative Hilbert schemes can be identified with the surface variant of the stable
pairs spaces of Pandharipande and Thomas \cite{PT1, PT3, KT1, KT2}.  When the spaces $\cC^{[n]}$ are smooth, their Euler numbers
may be computed by integrating Chern classes.  Writing these as ``tautological'' integrals over $S^{[n]}$ allows the fact \cite{EGL} that
all such integrals are determined by the Chern classes of $S$ and $L$ to be imported into enumerative geometry; one concludes
that the number of $\delta$-nodal curves in a $\delta$-dimensional slice of a $\delta$-very-ample linear system is given by a universal
formula.  This result had been previously conjectured by the first author \cite{G} and previously proven by  other methods \cite{Tz}. 

When $S$ is a K3 surface, explicit formulas for the $\chi(\cC^{[n]})$ are known.  The derivation of these
is however rather indirect: one shows \cite{MPT} an equivalence between the stable pairs and Gromov-Witten theories,\footnote{According
to \cite{MPT}, this is in the spirit of but does not directly follow from the conjectural 3-fold equivalence of \cite{MNOP}.}
and calculates \cite{BL} the latter.  Similar methods may be expected to work for the Abelian surface; alternatively, the formula
for the K3 surface determines the formula for the Abelian surface ``by universality''.  

In \cite{GS}, we replace the topological Euler characteristic $\chi$ with the Hirzebruch $\chi_{-y}$ genus ($\chi = \chi_{-1}$).
Imitating the formulas of \cite{KST} leads to 
putative refined curve counts which {\em conjecturally} 
are given by 
a universal formula in the Chern numbers.  The refinement recovers at $y = 1$ the counts
of complex curves, and conjecturally for a toric surface computes tropical refined Severi degrees \cite{BG, IM}.  The tropical refined
Severi degrees are defined combinatorially, 
but carry two meaningful enumerative specializations: at $y=1$ they count complex curves, and at $y=-1$ they count real curves
\cite{BG,M}. 

We moreover conjectured in \cite{GS}
an explicit formula for the refined invariants in the case of K3 or Abelian surfaces.  
Our goal here is to give a derivation of this formula, which in turn determines two of the four series involved
in the (still conjectural) formula for a general surface.

The quantity $$X_{-y}(x):=\frac{x(1-ye^{x(y^{1/2}-y^{-1/2})})}{y^{1/2}(1-e^{x(y^{1/2}-y^{-1/2})})}$$ is 
the normalized power series that defines the genus $\overline \chi_{-y}(M)=y^{-\dim(M)/2}\chi_{-y}(M)$. 
That is, for a vector bundle $E$ with Chern roots $e_1,\ldots,e_n$ we define $X_{-y}(E)=\prod_{i=1}^n X_{-y}(e_i)$. 
For a smooth projective variety $M$, we write $X_{-y}(M):=X_{-y}(T_M)$, and by the Riemann-Roch formula \cite{Hi} we have
$$\overline \chi_{-y}(M):=\int_M  X_{-y}(M) = y^{-\dim M/2} \sum_{p,q}(-1)^{p+q}y^q \mathrm{h}^{p,q}(M).$$

We collect the $\chi_{-y}$ genera of relative Hilbert schemes over
complete linear systems on the Abelian and K3 surfaces into generating series. 

\begin{defn}
Throughout we write $L_g$ to indicate a line bundle with no higher cohomology whose
sections have arithmetic genus $g$.  Note for all $g \ge 2$ there is an Abelian surface $A_g$ carrying a line bundle
$L_g$ such that the relative Hilbert schemes
$\cC_g^{[n]} \to |L_g|$ are smooth.  We define
\[
\A := \sum_{g \ge 2} \sum_{n\ge 0} \overline \chi_{-y}(\cC_g^{[n]}) t^{n+1-g} q^{g-1}.
\]
Similarly let $L_g$ be a linear system of genus $g$ curves on a K3 surface $K_g$
such that the relative Hilbert schemes $\cC_g^{[n]} \to |L_g|$ are smooth. We define
\[\K := \sum_{g\ge 0} \sum_{n\ge 0} \overline \chi_{-y}(\cC_g^{[n]})t^{n+1-g}q^{g-1}.\]
\end{defn}

We require two more generating series which contain the same information as $\A$. Writing $D = q \frac{d}{dq}$, we define

\begin{eqnarray*}
 \H & :=D^{-1}\A & =\sum_{g \ge 2} \sum_{n\ge 0} \overline \chi_{-y}(\cC_g^{[n]}) t^{n+1-g} \frac{q^{g-1}}{g-1}, \\
\X & := \frac{\H}{X_{-y}(\H)} & = \frac{y^{1/2}(1-e^{ (y^{1/2}-y^{-1/2}) \H})}{1-ye^{ (y^{1/2}-y^{-1/2})\H}}.
\end{eqnarray*}

As we recall in Section \ref{sec:universal}, when the surface $S$, line bundle $L$, and linear system $\P^\delta \subset |L|$ are 
such that $L$ has no higher cohomology and the relative Hilbert scheme $\cC^{[n]} \to \P^\delta$ has nonsingular total space, 
the Hirzebruch genus $\overline \chi_y(\cC^{[n]})$ is given by some universal expression (depending on $n, \delta$) in the
Chern classes of $S, L$.  Thus we may write $\overline \chi_{-y}({\cC}_{[S,L],\delta}^{[n]})$ for the evaluation
of this expression for any $S, L$, or indeed any specification of the Chern numbers 
$c_1(S)^2, c_1(S).L, L^2, c_2(S)$. We write $\overline \chi_{-y}({\cC}_{[S,L]}^{[n]}):=\overline \chi_{-y}({\cC}_{[S,L],\chi(L)-1}^{[n]})$ corresponding to the complete linear system. In speaking of $\chi(L), \chi(\mathcal{O}_S), g(L)$, etc., we mean the evaluation
on the specified Chern numbers of the formulas which usually give these quantities. 
More generally in the same way we may `integrate tautological classes over 
${\cC}_{[S,L],\delta}^{[n]}$.' 
Arguments similar to those of \cite{G, EGL, KST, GS} establish: 

\begin{thm} \label{thm:genf} 
There exist two more series $\mathbb{B}_1, \mathbb{B}_2\in \Q[y^{\pm 1/2},t^{-1}][[t,q]]$  such that the following hold:
\begin{REF} (1) In the first formula it shoud be $\overline \chi_y$ instead of $\chi_y$. \end{REF}
\begin{LG} I fixed it, I also changed $\chi_y$ to $\overline \chi_y$ a couple of times in the paragraph above to make it clearer.
\end{LG}
\begin{eqnarray*}
\X^{k}{\mathbb B}_1^{K_S^2} {\mathbb B}_2^{LK_S}
\K^{\chi(\oO_S)/2}{\mathbb A}^{1-\chi(\oO_S)/2}
& = & \sum_g \sum_{n\ge 0} t^{n-g+1} q^{g-1} \, \overline \chi_{-y}({\mathcal C}_{[S,L],\chi(L)-1-k}^{[n]}),\\
\H^{k}{\mathbb B}_1^{K_S^2} {\mathbb B}_2^{LK_S}
\K^{\chi(\oO_S)/2}{\mathbb A}^{1-\chi(\oO_S)/2} & = &
\sum_g \sum_{n\ge 0} t^{n-g+1} q^{g-1} \left(\int_{{\mathcal C}_{[S,L]}^{[n]}} X_{-y}({\mathcal C}_{[S,L]}^{[n]})\cap H^k\right).
\end{eqnarray*}

The meaning of the sum is that we fix $c_1(S)^2, c_2(S), c_1(S).L$, and vary only 
$L^2$, which we track by $g = g(L)$. 
\end{thm}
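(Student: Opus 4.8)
The plan is to run the universality-and-multiplicativity strategy of \cite{EGL, G, KST, GS} in three stages: express every quantity as a tautological integral over the Hilbert scheme of points $S^{[n]}$, invoke \cite{EGL} to conclude these are universal polynomials in $c_1(S)^2, c_1(S).L, L^2, c_2(S)$ whose generating series factor as products, and pin down the resulting building‑block series by specializing to Abelian and K3 surfaces.

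First I would recall from \secref{sec:universal} that, taking $L$ sufficiently positive --- which by universality suffices to determine the universal formula --- the evaluation map $H^0(L)\otimes\oO_{S^{[n]}}\to L^{[n]}$ is surjective and the relative Hilbert scheme over the complete linear system is the projective bundle $\cC^{[n]}_{[S,L]}=\proj(K)$ on $S^{[n]}$, with $K=\ker\bigl(H^0(L)\otimes\oO_{S^{[n]}}\to L^{[n]}\bigr)$. Its tangent bundle (via the relative Euler sequence) and the hyperplane class $H=c_1(\oO_{\proj(K)}(1))$ are then tautological, so $X_{-y}(\cC^{[n]}_{[S,L]})$ and all insertions $H^{k}$ are tautological classes on $S^{[n]}$. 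A generic $\proj^{\delta}\subset|L|$ with $k=\dim|L|-\delta$ cuts $\cC^{[n]}_{[S,L]}$ along $k$ generic members of $|\oO_{\proj(K)}(1)|$, which for generic choice is smooth of the expected codimension, so adjunction gives
\[
\overline\chi_{-y}\bigl(\cC^{[n]}_{[S,L],\delta}\bigr)=\int_{\cC^{[n]}_{[S,L]}}X_{-y}\bigl(\cC^{[n]}_{[S,L]}\bigr)\cdot\frac{H^{k}}{X_{-y}(H)^{k}}.
\]
Expanding $X_{-y}(x)^{-k}=\sum_{m\ge 0}a^{(k)}_{m}x^{m}$ with $a^{(k)}_{m}\in\Q[y^{\pm 1/2}]$ and $a^{(k)}_{0}=1$, the left-hand side of the first identity of \thmref{thm:genf} becomes the finite sum $\sum_{m}a^{(k)}_{m}\int_{\cC^{[n]}_{[S,L]}}X_{-y}(\cC^{[n]}_{[S,L]})\cap H^{k+m}$ of the integrals occurring in the second identity; by \cite{EGL} each of these is a universal polynomial in $c_1(S)^2, c_1(S).L, L^2, c_2(S)$ depending only on $n$ and the power of $H$, which is what gives $\chi_{-y}(\cC^{[n]}_{[S,L],\delta})$, and hence $\A$ and $\K$, a meaning for arbitrary Chern data.

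Next I would invoke the cobordism/exponential half of \cite{EGL} (in the form used in \cite{G, GS}) to upgrade these polynomials to a product formula: fixing $c_1(S)^2, c_2(S), c_1(S).L$ and tracking $L^2$ by $q$ through $2g-2=L^2+L.K_S$, one obtains
\[
\sum_{g}\sum_{n\ge 0}t^{\,n-g+1}q^{\,g-1}\Bigl(\textstyle\int_{\cC^{[n]}_{[S,L]}}X_{-y}(\cC^{[n]}_{[S,L]})\cap H^{k}\Bigr)=\mathcal G_{0}\cdot\mathcal G_{1}^{K_S^{2}}\mathcal G_{2}^{L.K_S}\mathcal G_{3}^{\chi(\oO_S)}\cdot\mathcal G_{4}^{\,k}
\]
with $\mathcal G_{i}\in\Q[y^{\pm 1/2},t^{-1}][[t,q]]$, the $L^{2}$-exponent being absorbed into the $q$-grading. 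Here $\mathcal G_{4}$ is the generating series of a single hyperplane insertion over the complete system, and the geometric analysis of the hyperplane section, exactly as in \cite{KST, GS}, identifies it with $D^{-1}\A=\H$, the operator $D^{-1}$ recording the dependence on $\chi(L_{g})=g-1$. Specializing $k=0$ to the Abelian surface ($K_S=0$, $\chi(\oO_S)=0$) forces $\mathcal G_{0}=\A$, and specializing $k=0$ to the K3 surface ($K_S=0$, $\chi(\oO_S)=2$) forces $\mathcal G_{0}\mathcal G_{3}^{2}=\K$, hence $\mathcal G_{3}=(\K/\A)^{1/2}$ and $\mathcal G_{0}\mathcal G_{3}^{\chi(\oO_S)}=\K^{\chi(\oO_S)/2}\A^{1-\chi(\oO_S)/2}$. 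Setting $\mathbb{B}_{1}:=\mathcal G_{1}$ and $\mathbb{B}_{2}:=\mathcal G_{2}$ gives the second identity with $\H^{k}$ in place of $\mathcal G_{4}^{k}$, and the first then follows by resumming, since $\sum_{m}a^{(k)}_{m}\H^{m}=X_{-y}(\H)^{-k}$ and therefore $\H^{k}X_{-y}(\H)^{-k}=\X^{k}$.

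I expect the genuine obstacle to be the interface of \cite{EGL} with the relative and sub-linear-system geometry: \cite{EGL} is formulated for ordinary tautological integrals over the always-smooth $S^{[n]}$, whereas here one must descend to the incidence variety $\cC^{[n]}$ uniformly across surfaces (including the passage from the hypothesis ``$L$ sufficiently positive'' to the formal evaluation symbol), carry the product structure through the enlarged variable set $(t,q)$ together with the parameter $k$, and --- the crux --- match the single-hyperplane building block $\mathcal G_{4}$ with the formally-defined $D^{-1}\A$. Once those points are secured, the series $\mathbb{B}_{1},\mathbb{B}_{2}$ are determined with no further input.
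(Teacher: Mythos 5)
Your overall architecture matches the paper's: tautological integrals over $S^{[n]}$, the universality of \cite{EGL}, specialization to Abelian and K3 surfaces to identify the factor $\K^{\chi(\oO_S)/2}\A^{1-\chi(\oO_S)/2}$, and the passage between the two identities via $\sum_m a^{(k)}_m\H^{k+m}=\H^kX_{-y}(\H)^{-k}=\X^k$, which is indeed consistent with the definition $\X=\H/X_{-y}(\H)$. But there is a genuine gap at the central step. You assert that the ``cobordism/exponential half of \cite{EGL}'' yields a product formula containing a factor $\mathcal{G}_4^{\,k}$ with $\mathcal{G}_4$ independent of $k$. The multiplicativity of \cite{EGL} is only in the four Chern numbers $K_S^2$, $L.K_S$, $L^2$, $c_2(S)$; the number $k$ of hyperplane insertions is not a Chern number, and for fixed $[S,L]$ the integrals $\int X_{-y}\cap H^k$ have no a priori reason to organize into $k$-th powers of a single series. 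That structure is precisely the content of the theorem, and it is where the work of Section \ref{sec:universal} lies: the residue formula \eqref{DSLfor} encodes the $\delta$-dependence as $q(x)^{-\delta-1}$ with $q(x)=x/X_{-y}(x)$; after the change of variable $x\mapsto q$ and the \cite{EGL} factorization $A_0^{\chi(L)}A_1^{K_S^2}A_2^{LK_S}A_3^{\chi(\oO_S)}$, one passes to the compositional inverse $\X=(qt/A_0)^{-1}$ and applies a Lagrange-inversion residue identity, which trades the $\chi(L)$-th power of $A_0$ against the coefficient extraction in the new variable and leaves exactly $\X^{\chi(L)-1-\delta}=\X^k$. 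Without this mechanism the $k$-th-power structure is simply being assumed.

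Relatedly, your identification $\mathcal{G}_4=\H=D^{-1}\A$ is asserted rather than proven (``the geometric analysis of the hyperplane section \ldots identifies it''), and you yourself flag it as the crux. In the paper it again falls out of the same computation: setting $k=0$ on the Abelian surface gives $\A=D\X/\bigl((1-y^{1/2}\X)(1-y^{-1/2}\X)\bigr)=D\bigl(x(\X)\bigr)$, whence $\H=D^{-1}\A=x(\X)$ and $\X=q(\H)=\H/X_{-y}(\H)$; the second identity of the theorem is then deduced from the first by expanding $x^k=\sum_{l\ge k}a_l(y)\,q(x)^l$ inside the residue, the reverse of your proposed resummation. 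So while your reduction of the first identity to the second is a valid reorganization, both of the facts it rests on --- the existence of the $k$-th-power structure and the identification of its base with $D^{-1}\A$ --- require the change-of-variables and Lagrange-inversion argument that your proposal omits.
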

In both formulas, the summand on the RHS vanishes unless $g \ge k+2+LK_S-\chi(\oO_S)$.   Indeed, this may be checked when $L$ is an actual
line bundle with no higher cohomology on an actual surface $S$, where it amounts to $\dim |L|\ge k$.  

\vspace{2mm}

The Hodge polynomials of the relative Hilbert schemes on $K3$ surfaces were computed by Kawai and Yoshioka; specializing
these gives an explicit formula for $\K$.  In the present note we will compute $\A$.

We introduce some notation in order to state the answer.
Let $z$ be a complex variable and $\tau$ a variable from the complex upper half plane. We denote $y=e^{z}$, $q:=e^{2\pi i \tau}$. 
We denote one of the standard theta functions by
$$\theta(z)=\theta(z,\tau):=\sum_{n\in\Z} (-1)^n q^{\frac{1}{2}(n+\frac{1}{2})^2}y^{n+\frac{1}{2}}=q^{1/8}(y^{1/2}-y^{-1/2})\prod_{n>0} (1-q^n)(1-q^ny)(1-q^n/y),$$ and the Eisenstein series of weight $2$ by 
 $$G_2(\tau):=-\frac{1}{24}+\sum_{n>0} \left(\sum_{d|n} d\right)q^n.$$
 By abuse of notation we also write $\theta(y):=\theta(z)$, $G_2(q):=G_2(\tau)$.
Let $\vphantom{A}'$ denote $\frac{\partial}{\partial z}= y\frac{\partial}{\partial y}$, and $D=\frac{1}{2\pi i} \frac{\partial}{\partial \tau}=q\frac{d}{dq}$.
\begin{defn}
We write 
\begin{align*}
A(y,q):= \sum_{nd>0} \sgn(d) n^2 y^d q^{nd}.
\end{align*}
Here, as also a number of times below  $\sum_{nd>0}$ denotes the sum over pairs $n,d$ of integers with $nd>0$.
Thus  $A(y,q)$ can be viewed as a theta function for an indefinite lattice, as considered e.g. in 
\cite{Z}, \cite{GZ}.
\end{defn}

\begin{rem} 
$A(y,q)$ can be rewritten as follows.
\begin{align*}A(y,q)&=-\frac{1}{3}\frac{\theta'''(y)}{\theta(y)}-2G_2(q)\frac{\theta'(y)}{\theta(y)} \\
& = \frac{1}{\theta(y)}\left( -\frac{1}{6} D\theta'(y)-2G_2(q)\theta'(y)\right).
\end{align*} 
\end{rem}
\begin{pf}
The equality in the second line holds by the heat equation
$\theta''(y)=\frac{1}{2} D\theta(y)$.  
Now we prove the first equality: Denote $y_1=e^{z_1}$, $y=e^{z}$ for  complex variables $z_1,z$.
In \cite[page 456, compare (iii) and (vii)]{Z} it is proved that  
\begin{equation}\label{zagfun}
\frac{\theta'(0)\theta(y_1y)}{\theta(y_1)\theta(y)}=\frac{y_1y-1}{(y_1-1)(y-1)}-\sum_{nd>0}
\sgn(d)y_1^ny^dq^{nd}.
\end{equation}
We take the coefficient of $z_1^2$ of both sides of \eqref{zagfun}.
By the identity  \cite[eq.~(7)]{Z} we have
$$\frac{z_1\theta'(0)}{\theta(y_1)}=\exp\left(2\sum_{k\ge 2} G_k(\tau) \frac{z_1^k}{k!}\right).$$
This gives
$$\Coeff_{z_1^2}\left[\frac{\theta'(0)\theta(y_1y)}{\theta(y_1)\theta(y)}\right]=\Coeff_{z_1^3}\left[\frac{\theta(y_1y)}{\theta(y)}\right]+G_2(\tau)\Coeff_{z_1}\left[\frac{\theta(y_1y)}{\theta(y)}\right]
=\frac{1}{6}\frac{\theta'''(y)}{\theta(y)}+G_2(\tau)\frac{\theta'(y)}{\theta(y)}.$$
On the other hand 
$$\Coeff_{z_1^2}\left[\frac{y_1y-1}{(y_1-1)(y-1)}-\sum_{nd>0}
\sgn(d)y_1^ny^dq^{nd}\right]=-\frac{1}{2}\sum_{nd>0} \sgn(d)n^2y^{d}q^{nd}=-\frac{1}{2}A(y,d).$$
This proves the claim.\end{pf}
\begin{REF}(2) The referee cannot follow the proof for the left hand side. GIve a precise reference for the formula from [Z]
\end{REF}
\begin{LG} The computation as sketched was correct, but difficult to carry out. I now have a new proof, by making a Taylor development of $\theta(y)/\theta'(0)$.

Made reference to [Z] more precise.\end{LG}
We abbreviate
$$[n]_y:=\frac{y^{n/2}-y^{-n/2}}{y^{1/2}-y^{-1/2}}=y^{(n-1)/2}+y^{(n-3)/2}+\ldots+y^{-(n-1)/2}.$$

\begin{thm} \label{thm:A} 
Let $L_g$ be a linear system of genus $g$ curves on an Abelian surface $A_g$ such that the relative Hilbert schemes
$\cC_g^{[n]} \to |L_g|$ are smooth.  Then,
\begin{align*}
\mathbb{A} 
&:= \sum_{g \ge 2} \sum_{n\ge 0} \overline \chi_{-y}(\cC_g^{[n]}) t^{n+1-g} q^{g-1} \\
& =  (t+t^{-1}-y^{1/2}-y^{-1/2})\sum_{n>0,d>0}
n^2[d]_y[d]_{ty^{1/2}}[d]_{ty^{-1/2}} q^{nd} \\
& = \sum_{n>0,d>0} n^2\frac{y^{d/2}-y^{-d/2}}{y^{1/2}-y^{-1/2}} (t^d+t^{-d}-y^{d/2}-y^{-d/2})q^{nd} \\
& = \frac{1}{y^{1/2}-y^{-1/2}}\left(A\left(y^{1/2}t,q\right)+A\left(y^{1/2}/t,q\right)-A(y,q)  \right).
\end{align*}
\end{thm}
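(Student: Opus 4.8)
The plan is to compute $\mathbb{A}$ by combining the universal structure of \thmref{thm:genf} with the known $K3$ answer $\mathbb{K}$ coming from Kawai--Yoshioka. On an Abelian surface $S = A$ we have $K_S = 0$ and $\chi(\oO_S) = 0$, so the first displayed formula of \thmref{thm:genf} with $k = 0$ collapses to
\[
\mathbb{A} = \sum_g \sum_{n \ge 0} t^{n-g+1} q^{g-1}\, \chi_{-y}(\mathcal{C}_{[A,L]}^{[n]}),
\]
the generating function over the complete linear systems. The point is that this same generating series on the K3 side is $\mathbb{X}^{0}\mathbb{B}_1^{0}\mathbb{B}_2^{0}\mathbb{K}^{1}\mathbb{A}^{0} = \mathbb{K}$ \emph{only} if $\chi(\oO) = 2$; more usefully, specializing the universal formula to $K3$ numbers ($K_S = 0$, $\chi(\oO_S) = 2$) and to Abelian numbers ($K_S = 0$, $\chi(\oO_S) = 0$) and dividing eliminates the unknown series $\mathbb{B}_1, \mathbb{B}_2, \mathbb{X}$ entirely, since those only enter through $K_S^2$, $LK_S$, $k$, all of which vanish here. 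So first I would extract from \thmref{thm:genf}, evaluated on the two relevant sets of Chern numbers, the clean identity $\mathbb{A} = \mathbb{K}/(\text{the } K3 \text{ complete-system series raised to a power})$; in fact the correct bookkeeping gives $\mathbb{A}$ as an explicit elementary modification of $\mathbb{K}$ (this is precisely the statement that on both surfaces the relevant exponent of the "surface-independent" factor is $1 - \chi(\oO)/2$, so $\mathbb{K}^{\chi(\oO)/2}\mathbb{A}^{1-\chi(\oO)/2}$ is the universal series, and comparing the $\chi(\oO) = 0$ and $\chi(\oO) = 2$ cases isolates $\mathbb{A}$).

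Second, I would insert the explicit Kawai--Yoshioka formula for the Hodge polynomials of $\cC_g^{[n]}$ on $K3$, specialize to $\overline\chi_{-y}$, and assemble $\mathbb{K}$ in closed form. The classical Yoshioka--Kawai result expresses the generating function over the complete linear system in terms of $1/(y^{1/2} - y^{-1/2})^2 \prod_{n>0}(\cdots)$ — essentially an inverse power of $\theta(y,\tau)$ times $q$-expansions — so after the division in Step 1 the unknown "$\mathbb{A}$" emerges as a ratio of such product expressions, which should simplify to an indefinite theta function. Third, I would recognize the resulting $q$-series as $A(y^{1/2}t, q) + A(y^{1/2}/t, q) - A(y,q)$ divided by $y^{1/2} - y^{-1/2}$: expand each $A(\cdot,q) = \sum_{nd > 0}\sgn(d)\,n^2\,(\cdot)^d q^{nd}$, combine the three sums, and check that the $d > 0$ and $d < 0$ contributions reorganize into $n^2 \frac{y^{d/2} - y^{-d/2}}{y^{1/2} - y^{-1/2}}(t^d + t^{-d} - y^{d/2} - y^{-d/2}) q^{nd}$ with $n, d > 0$; this is a routine manipulation using $[d]_y = (y^{d/2} - y^{-d/2})/(y^{1/2} - y^{-1/2})$ and the factorization $(t+t^{-1} - y^{1/2} - y^{-1/2})[d]_y[d]_{ty^{1/2}}[d]_{ty^{-1/2}} = [d]_y(t^d + t^{-d} - y^{d/2} - y^{-d/2})$, which one verifies by a short telescoping or generating-function identity in $t$. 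The Remark's identity $A(y,q) = -\frac13 \theta'''/\theta - 2G_2\, \theta'/\theta$ then repackages everything in quasi-Jacobi form if desired.

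The main obstacle is Step 1–2: correctly matching the combinatorial normalizations. One must be careful that the Kawai--Yoshioka formula, as usually stated, uses the (unnormalized) $\chi_{-y}$ or Hodge numbers and a particular choice of variables, whereas here we use $\overline\chi_{-y} = y^{-\dim/2}\chi_{-y}$ and the shifted exponents $t^{n+1-g}q^{g-1}$; tracking these shifts through the universality theorem, and confirming that the $\mathbb{B}_1, \mathbb{B}_2, \mathbb{X}$-dependence genuinely drops out for \emph{both} the K3 and Abelian Chern data (so that the ratio is well-defined and equals a concrete power series), is the delicate bookkeeping. A secondary technical point is justifying convergence/formal manipulation of the indefinite theta series $A(y,q)$ — but since everything is an identity of formal power series in $q$ (with Laurent coefficients in $y^{\pm 1/2}, t^{\pm 1}$), this is not a real difficulty, and the Remark already records the needed rewriting via Zagier's formula \eqref{zagfun}. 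Once the closed form for $\mathbb{K}$ after division is in hand, the passage to the four displayed expressions for $\mathbb{A}$ is purely formal.
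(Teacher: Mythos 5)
Your Step 1 is where the argument breaks down, and it is fatal to the whole strategy. In \thmref{thm:genf} the series $\A$ and $\K$ are \emph{two independent unknowns} among the four universal series (the others being $\X$, $\mathbb{B}_1$, $\mathbb{B}_2$; in the proof of that theorem they arise from $A_0,\dots,A_3$, with $\K/\A=\mathbb{B}_3^2$). Specializing the universal formula to $K_S=0$, $k=0$ and $\chi(\oO_S)=0$ gives the tautology $\A=\A$, and specializing to $\chi(\oO_S)=2$ gives the tautology $\K=\K$; ``dividing'' the two only produces $\K/\A=\mathbb{B}_3^{2}$, which introduces a new undetermined series rather than eliminating one. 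Universality cannot determine $\A$ from $\K$: knowing the answers for all complete linear systems on K3 surfaces pins down only one functional combination of the two relevant universal series, not both. The paper says as much in the introduction (the computation of $\A$ ``determines two of the four series'') and in Remark~\ref{K3Ab}, where passing from $\K$ to $\A$ requires an extra structural \emph{conjecture} (the same polynomials $P_i$ governing both), not universality.

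What is actually needed, and what the paper does in Section~\ref{sec:Yoshioka}, is a direct geometric computation on the Abelian surface: (i) Yoshioka's lemma on $\mu$-stable sheaves with $c_1$ minimal, giving the two maps $\Syst^1(r,d,e)\to\mM(r,d,e)\leftarrow\Syst^1(r+1,d,e+\chi(\oO_X))$ with fibres $\P H^0(F)$ and $\P H^1(F)$; (ii) the resulting recursion \eqref{eq:rec} for Hodge polynomials, which expresses the relative Hilbert scheme $\Syst^1(0,C,e)$ in terms of the moduli spaces $\mM(b,C,e)$; (iii) Yoshioka's deformation equivalence of $\mM(r,C,e)$ with $A^{[g-1-re]}$ (whose $\overline\chi_{-y}$ vanishes) except when $r\mid(g-1)$ and the moduli space is $r^2$ points --- this is where the arithmetic factor $n^2$ and the divisor sum over $nd=g-1$ come from, and there is no way to see this structure from the K3 side; (iv) the duality handling $e<0$; and (v) the vanishing $N^i_{[S,L],\delta}=0$ for $i>\delta$ from \cite{GS} to fix the $t^0$ coefficient, which the recursion does not reach. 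Your Step~3 (the formal equivalence of the three displayed expressions for $\A$) is fine, but it is the easy part; without a substitute for the sheaf-theoretic input above, the proposal does not prove the theorem.
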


The following is a specialization (and slight reformulation) of a result of Kawai and Yoshioka \cite{KY}.

\begin{thm} \label{thm:K} \cite{KY} 
Let $L_g$ be a linear system of genus $g$ curves on a K3 surface $K_g$
such that the relative Hilbert schemes $\cC_g^{[n]} \to |L_g|$ are smooth.

\[
\K := \sum_{g\ge 0} \sum_{n\ge 0} \overline \chi_{-y}(\cC_g^{[n]})t^{n+1-g}q^{g-1} = 
\frac{y^{-1/2}-y^{1/2}}{\Delta(q)}\frac{\theta'(0)^3}{\theta(y^{1/2}/t)\theta(ty^{1/2})\theta(y)}.
\]
\end{thm}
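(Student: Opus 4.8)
The plan is to deduce the formula from the Hodge-theoretic computation of Kawai and Yoshioka \cite{KY}: the proof reduces to identifying the spaces, specializing their Hodge data to the $\chi_{-y}$ genus, and reformulating the resulting expression as the stated theta quotient.

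First I would match the geometry. On a K3 surface $|L_g|\cong\P^g$, the fibre of $\cC_g^{[n]}\to|L_g|$ over a smooth curve $C$ is $\operatorname{Sym}^n C$, so $\dim\cC_g^{[n]}=g+n$. Via the support map these relative Hilbert schemes are identified (by a Fourier--Mukai transform, with Mukai vector $(0,[L_g],\,\cdot\,)$ encoding $g$ and $n$) with the smooth moduli of stable pairs whose Hodge polynomials are computed in \cite{KY}. I would fix precisely the dictionary between the pair $(g,n)$ and the grading variables indexing their generating function, so that their sum runs over exactly the monomials appearing in $\K$.

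Next I would specialize the Hodge data. Since for smooth projective $M$ one has $\chi_{-y}(M)=\sum_{p,q}(-1)^{p+q}y^q h^{p,q}(M)$, the $\chi_{-y}$ genus is the substitution $(u,v)\mapsto(1,y)$ in the signed Hodge polynomial $\sum_{p,q}(-1)^{p+q}h^{p,q}(M)\,u^p v^q$, hence a fixed linear combination of the Hodge numbers of \cite{KY}. I would then apply the normalization $\overline\chi_{-y}=y^{-\dim/2}\chi_{-y}=y^{-(g+n)/2}\chi_{-y}$ and collect terms with the weights $t^{n+1-g}q^{g-1}$. This bookkeeping is delicate: one must verify that the half-integral powers of $y$ introduced by the normalization combine with the Kawai--Yoshioka variables so that the total $t$- and $q$-exponents come out as $n+1-g$ and $g-1$, and that the overall prefactor is exactly $y^{-1/2}-y^{1/2}$.

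Finally comes the reformulation, which I expect to be the main obstacle. The Kawai--Yoshioka answer is naturally an infinite product, so I would substitute the product expansions
\[
\theta(z)=q^{1/8}(y^{1/2}-y^{-1/2})\prod_{n>0}(1-q^n)(1-q^ny)(1-q^n/y),\qquad \theta'(0)=q^{1/8}\prod_{n>0}(1-q^n)^3,
\]
together with the identity $\Delta(q)=q\prod_{n>0}(1-q^n)^{24}=\theta'(0)^8$, into the claimed right-hand side and check that the two products agree factor by factor. Here the three theta factors $\theta(y^{1/2}/t),\theta(ty^{1/2}),\theta(y)$ furnish the double products in $ty^{\pm1/2}$ and $y$ that track the two gradings, while $\theta'(0)^3/\Delta(q)=\theta'(0)^{-5}$ contributes the pure $q$-series $q^{-5/8}\prod_{n>0}(1-q^n)^{-15}$; the essential content is recognizing Kawai--Yoshioka's product as precisely this theta quotient. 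Once the dictionary of the previous step is fixed this is a direct, if intricate, manipulation of infinite products. For the finitely many small values of $g$ where $|L_g|$ may fail to be generic, the identity is to be read in the universal sense of Theorem \ref{thm:genf}.
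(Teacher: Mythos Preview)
Your plan is correct and matches the paper's treatment: the paper does not give an independent proof of this theorem but presents it as ``a specialization (and slight reformulation) of a result of Kawai and Yoshioka \cite{KY}'', which is precisely what you propose to carry out. The paper's Section~3 sketches the Kawai--Yoshioka mechanism (Yoshioka's lemma and the recursion between $\Syst^1$ and $\mM$) in the course of adapting it to the Abelian surface, so you can see there exactly how \cite{KY} arrives at the Hodge polynomials you intend to specialize.

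One small correction: the identification of $\cC_g^{[n]}$ with the stable-pair (coherent-system) spaces $\Syst^1(0,1,e)$ is not via a Fourier--Mukai transform but by the elementary dualization of \cite[Appendix~B]{PT3}, as the paper recalls. Fourier--Mukai and deformation equivalence enter only later, when one identifies the positive-rank moduli spaces $\mM(r,d,e)$ with Hilbert schemes of points in order to plug in their known Hodge polynomials. This does not affect the validity of your outline, only the attribution of that step.
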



Finally, we write explicitly the specialization ($y=1$) to Euler numbers.  
We denote $\overline B_1(q,t):={\mathbb B}_1(q,1,t)$, $\overline B_2(q,t):={\mathbb B}_2(q,1,t)$. 
In \cite{GS} we have introduced the function
$$\widetilde{DG_2}(y,q):=\sum_{n,d>0} n [d]_y^2 q^{nd}=\frac{D\log\frac{\theta'(0)}{\theta(y)}}{y-2+y^{-1}},$$
(the second identity is elementary).
From the third line in Theorem \ref{thm:A}, 
we have 
\begin{eqnarray*} 
{\mathbb A}(q,1,t) & = &(t-2+t^{-1})D\widetilde{DG_2}(t,q)=DD\log\frac{\theta'(0)}{\theta(t)}, \\
\H(q,1,t) & = & (t-2+t^{-1})\widetilde{DG_2}(t,q)=D\log\frac{\theta'(0)}{\theta(t)}.
\end{eqnarray*} 
From Theorem \ref{thm:K} it is easy to see that
\[
\K(q,1,t) =
\frac{\theta'(0)^2}{\Delta(q)\theta(t)^2} =
\frac{1}{(t-2+t^{-1})q\prod_{n>0} (1-q^n)^{20}(1-q^nt)^2(1-y^n/t)^2} =\frac{1}{\phi_{10,1}(t,q)}, \]
where $\phi_{10,1}(t,q)$ is up to normalization the unique Jacobi cusp form on $Sl(2,\Z)$ of weight $10$ and index $1$.
It is easy to see that  $X_{-1}(x)=(1+x)$, thus 
\[\X(q,1,t)=\frac{D\log\frac{\theta'(0)}{\theta(t)}}{1+D\log\frac{\theta'(0)}{\theta(t)}}.\] 
Putting this together we get the following.

\begin{cor}\label{cor:genf}
The generating series of integrals against  the hyperplane class is:
\begin{align*}
&\sum_{g\ge k+2+LK_S-\chi(\oO_S)}\sum_{n\ge 0} \left(\int_{{\mathcal C}_{[S,L]}^{[n]}} c({\mathcal C}_{[S,L]}^{[n]})\cap H^k\right)t^{n-g+1} q^{g-1}\\&=
\left(D\log\frac{\theta'(0)}{\theta(t)}\right)^k {\overline B}_1^{K_S^2} {\overline B}_2^{LK_S}\left(\frac{\theta'(0)^2}{\Delta(q)\theta(t)^2}\right)^{\chi(\oO_S)/2}\left(DD\log\frac{\theta'(0)}{\theta(t)}\right)^{1-\chi(\oO_S)/2}.
\end{align*}
The generating series of Euler characteristics is:
\begin{align*}
&\sum_{g\ge k+2+LK_S-\chi(\oO_S)}\sum_{n\ge 0} \chi({\mathcal C}_{[S,L],\chi(L)-1-k}^{[n]})t^{n-g+1} q^{g-1}\\&=
\left(\frac{D\log\frac{\theta'(0)}{\theta(t)}}{1+D\log\frac{\theta'(0)}{\theta(t)}}\right)^k {\overline B}_1^{K_S^2} {\overline B}_2^{LK_S}\left(\frac{\theta'(0)^2}{\Delta(q)\theta(t)^2}\right)^{\chi(\oO_S)/2}\left(DD\log\frac{\theta'(0)}{\theta(t)}\right)^{1-\chi(\oO_S)/2}.
\end{align*}
\end{cor}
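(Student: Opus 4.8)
The plan is to read off both identities of the corollary as the specialization $y=1$ of the two formulas in \thmref{thm:genf}: the generating series of integrals against $H^k$ is the $y=1$ case of the second formula there, and the generating series of Euler characteristics is the $y=1$ case of the first. So the proof amounts to recording what each side becomes at $y=1$.

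First I would pin down the $y=1$ values of the series appearing on the left, namely $\X$, $\H$, $\mathbb{B}_1$, $\mathbb{B}_2$, $\K$, $\A$. By definition $\mathbb{B}_i(q,1,t) = \overline{B}_i(q,t)$, so these stay symbolic. From the third line of \thmref{thm:A}, putting $y=1$ and using $[d]_1 = d$ together with the elementary identity $(t-2+t^{-1})[d]_t^2 = t^d+t^{-d}-2$ gives $\A(q,1,t) = (t-2+t^{-1})\,D\widetilde{DG_2}(t,q)$, hence $\H(q,1,t) = D^{-1}\A(q,1,t) = (t-2+t^{-1})\widetilde{DG_2}(t,q)$; the identity $(t-2+t^{-1})\widetilde{DG_2}(t,q) = D\log\frac{\theta'(0)}{\theta(t)}$ of \cite{GS} then rewrites these as $\H(q,1,t) = D\log\frac{\theta'(0)}{\theta(t)}$ and $\A(q,1,t) = DD\log\frac{\theta'(0)}{\theta(t)}$. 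Since $X_{-1}(x) = 1+x$, the definition $\X = \H/X_{-y}(\H)$ gives $\X(q,1,t) = \frac{D\log(\theta'(0)/\theta(t))}{1+D\log(\theta'(0)/\theta(t))}$. Finally $\K(q,1,t) = \frac{\theta'(0)^2}{\Delta(q)\theta(t)^2}$ by putting $y=1$ in \thmref{thm:K}; this last one is a removable $0/0$, resolved using the product expansion $\theta(z)\sim 2\pi i\,\theta'(0)\,z$ near $z=0$ and $\theta(1/t) = -\theta(t)$. These are precisely the four expressions displayed just before the statement of the corollary.

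Next I would specialize the right-hand sides. At $y=1$ the Hirzebruch genus $\chi_{-y}$ becomes the topological Euler characteristic $\chi$ --- equivalently, the universal polynomial in $c_1(S)^2, c_2(S), c_1(S).L, L^2$ that defines $\chi_{-y}(\mathcal{C}_{[S,L],\delta}^{[n]})$ becomes the one defining $\chi$ --- so the right side of the first formula of \thmref{thm:genf} becomes $\sum_{g}\sum_{n\ge 0} t^{n-g+1}q^{g-1}\,\chi(\mathcal{C}_{[S,L],\chi(L)-1-k}^{[n]})$. Likewise $X_{-1}(x) = 1+x$ gives $X_{-1}(\mathcal{C}^{[n]}) = \prod_i(1+e_i) = c(T_{\mathcal{C}^{[n]}})$ for the Chern roots $e_i$ of the tangent bundle, so the right side of the second formula becomes $\sum_g\sum_{n\ge 0} t^{n-g+1}q^{g-1}\int_{\mathcal{C}_{[S,L]}^{[n]}} c(\mathcal{C}_{[S,L]}^{[n]})\cap H^k$; and the summation range $g \ge k+2+LK_S-\chi(\oO_S)$ is the $y$-independent vanishing statement recorded immediately after \thmref{thm:genf}. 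Substituting the left-hand values into these two identities then yields the two displayed formulas. The argument is essentially a substitution: all the content sits in \thmref{thm:genf}, \thmref{thm:A} and \thmref{thm:K}, and the one place where anything is actually computed is the short chain of $\theta$-function manipulations in the previous paragraph, resting on the product formula for $\theta$, the heat equation $\theta''(y)=\frac{1}{2} D\theta(y)$, and the definition of $\widetilde{DG_2}$; that is the step I would write out in full, and the only one I regard as requiring care.
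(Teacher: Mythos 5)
Your proposal is correct and follows the same route as the paper: the corollary is obtained by setting $y=1$ in the two formulas of Theorem~\ref{thm:genf}, after recording the specializations $\H(q,1,t)=D\log\frac{\theta'(0)}{\theta(t)}$, $\A(q,1,t)=DD\log\frac{\theta'(0)}{\theta(t)}$, $\X(q,1,t)=\H/(1+\H)$ and $\K(q,1,t)=\theta'(0)^2/(\Delta(q)\theta(t)^2)$, together with $X_{-1}(x)=1+x$ turning the genus into the total Chern class and $\chi_{-1}$ into the Euler characteristic. Your treatment of the removable singularity in $\K$ at $y=1$ (via $\theta(z)\sim 2\pi i\,\theta'(0)z$ and the oddness of $\theta$) is the one detail the paper leaves implicit, and it is handled correctly.
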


As $D\log\frac{\theta'(0)}{\theta(t)}\in q\Q[t^{\pm 1}][[q]]$, we can develop
$$\left(\frac{D\log\frac{\theta'(0)}{\theta(t)}}{1+D\log\frac{\theta'(0)}{\theta(t)}}\right)^k =\sum_{m\ge 0} (-1)^{m}\binom{m+k-1}{k-1}\left(D\log\frac{\theta'(0)}{\theta(t)}\right)^{m+k},$$ thus for K3 surfaces the result specializes to formula (7) and Thm 6 of \cite{MPT}.

We return to the setting of \cite{GS}, where polynomials $N^{i}_{[S,L],\delta}(y)$ (called $\overline N^{i}_{[S,L],\delta}(y)$ there) were defined by the following formula, in which $g = g(L)$. 
\begin{equation} \label{eq:Ni} \sum_{n\ge 0} \overline \chi_{-y}(\cC_{[S,L],\delta}^{[n]}) t^{n+1-g}=\sum_{i=0}^\infty N^{i}_{[S,L],\delta}(y)
(t+t^{-1}-y^{1/2}-y^{-1/2})^{g-i-1}.\end{equation}
\begin{REF} (3) Should say they were called $\overline N^{i}_{[S,L],\delta}(y)$ in [GS], done above.\end{REF}
This formula refines the change of variable used to pass from Euler numbers of 
Hilbert schemes to enumerative information (of the sort sometimes 
called Gopakumar-Vafa or 'BPS' invariants).  In the good situation where $[S, L]$ 
comes from a line bundle on a surface with no higher cohomology and the appropriate
relative Hilbert schemes are nonsingular, $N^\delta|_{y=1}$ counts the number of 
$\delta$ nodal curves in a general $\P^\delta \subset |L|$ by \cite{KST}. 

According to \cite[Conj. 40]{GS}, we expect the vanishing $N^{i}_{[S,L],\delta}=0$ for $i>\delta$.  This was proven in case $K_S$ is numerically trivial.  Moreover, in this case,
or assuming the vanishing conjecture and in terms of two undetermined power series,  \cite[Conj. 67]{GS} gives a conjectural generating function for the highest order term $N^{\delta}_{[S,L],\delta}$. 
To establish this formula, and to better understand the $N^i$, it remains to develop 
the series introduced here in the variable $x=(t+t^{-1}-y^{1/2}-y^{-1/2})$.  
 
From \thmref{thm:genf} and Equation \eqref{eq:Ni} we obtain the generating series:

\begin{cor}\label{gsconj} Let $S$, $L$ be arbitrary, $g$ the arithmetic genus of $L$, then 
$$\sum_{i} N^i_{[S,L],\chi(L)-1-k}(y) x^{g-i-1}=\Coeff_{q^{g-1}}\, \big[\X^{k}{\mathbb B}_1^{K_S^2} {\mathbb B}_2^{LK_S}
\K^{\chi(\oO_S)/2}{\mathbb A}^{1-\chi(\oO_S)/2} \big].$$
\end{cor}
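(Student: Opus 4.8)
The plan is to obtain \corref{gsconj} by combining the first identity of \thmref{thm:genf} with the defining relation \eqref{eq:Ni} of the polynomials $N^i_{[S,L],\delta}(y)$; no new geometric input is needed, only coefficient extraction in $q$.

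First I would fix the Chern numbers $c_1(S)^2$, $c_2(S)$, $c_1(S).L$, so that the genus $g=g(L)$ is the only remaining parameter in the sums of \thmref{thm:genf}, tracking $L^2=2g-2-LK_S$ and hence also $\chi(L)=g-1-LK_S+\chi(\oO_S)$. By \thmref{thm:genf} the product $\X^{k}\mathbb{B}_1^{K_S^2}\mathbb{B}_2^{LK_S}\K^{\chi(\oO_S)/2}\mathbb{A}^{1-\chi(\oO_S)/2}$ is a well-defined formal series in one fixed ring (the completion of $\Q[y^{\pm1/2},t^{-1}][[t,q]]$ used in \secref{sec:universal}; when $\chi(\oO_S)$ is odd, $\K^{1/2}$ and $\mathbb{A}^{1/2}$ occur only multiplied together and the resulting half-integral powers of $q$ and of $t$ cancel, so the product still lies there), and it equals
\[
\sum_g \sum_{n\ge 0} t^{n-g+1}\, q^{g-1}\, \overline\chi_{-y}(\mathcal{C}_{[S,L],\chi(L)-1-k}^{[n]}),
\]
where $\overline\chi_{-y}$ denotes the normalized genus of the formal relative Hilbert scheme, normalized exactly as in \eqref{eq:Ni} (for $k=0$ the left-hand side coincides with that of the second identity of \thmref{thm:genf}, which pins down the normalization). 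Applying $\Coeff_{q^{g-1}}$ to both sides isolates a single value of $g$, since $g$ enters the right-hand side only through the exponent $g-1$ of $q$, and gives
\[
\Coeff_{q^{g-1}}\big[\X^{k}\mathbb{B}_1^{K_S^2}\mathbb{B}_2^{LK_S}\K^{\chi(\oO_S)/2}\mathbb{A}^{1-\chi(\oO_S)/2}\big] = \sum_{n\ge 0} \overline\chi_{-y}(\mathcal{C}_{[S,L],\chi(L)-1-k}^{[n]})\, t^{n+1-g},
\]
the exponents of $t$ matching verbatim ($t^{n-g+1}=t^{n+1-g}$).

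To finish, I would invoke \eqref{eq:Ni} with $\delta=\chi(L)-1-k$, which says precisely that the right-hand side of the last display equals $\sum_{i\ge 0} N^i_{[S,L],\chi(L)-1-k}(y)\, x^{g-i-1}$ with $x=t+t^{-1}-y^{1/2}-y^{-1/2}$; chaining the two equalities yields \corref{gsconj}. In the degenerate range $g<k+2+LK_S-\chi(\oO_S)$ (equivalently $\chi(L)-1-k<0$) both sides vanish: the left side by the vanishing recorded just after \thmref{thm:genf}, and then every $N^i$ with that $\delta$ is forced to be $0$, since the series $x^{g-i-1}$, expanded as Laurent series in $t$, are linearly independent over $\Q[y^{\pm1/2}]$; the vanishing conjecture $N^i=0$ for $i>\delta$ plays no role here. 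I expect no serious obstacle: the corollary is a formal consequence of \thmref{thm:genf} and \eqref{eq:Ni}, and the only real care needed is organizational — fixing one ambient ring in which \thmref{thm:genf} is a literal identity and in which $\Coeff_{q^{g-1}}$ is a legitimate operation, checking that this extraction is compatible with the ``fix every Chern number but $L^2$'' convention underlying the sum in \thmref{thm:genf}, and confirming that the $t$-exponents in \thmref{thm:genf} and \eqref{eq:Ni} coincide so that the two right-hand sides can be identified term by term.
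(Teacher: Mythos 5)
Your proof is correct and is exactly the argument the paper intends: the corollary is obtained by extracting the coefficient of $q^{g-1}$ from the first identity of \thmref{thm:genf} and identifying the resulting $t$-series with the left-hand side of \eqref{eq:Ni} for $\delta=\chi(L)-1-k$. The paper states this as an immediate consequence with no further detail, so your additional care about the ambient ring and the degenerate range is harmless elaboration of the same route.
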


 We define
$$\widetilde\Delta(y,q):=\frac{\Delta(q)\theta(y)^2}{(y-2+y^{-1})\theta'(0)^2}=q\prod_{n>0} (1-q^n)^{20}(1-q^ny)^2(1-q^n/y)^2.$$

\begin{cor}\label{numconj}  \cite[Conj.~68]{GS}
\begin{REF} (4) There is no conjecture 66 in [GS]\end{REF}
\begin{LG} Corrected it is Conj. 68, .
\end{LG}
If $K_S$ is numerically trivial, then 
$$N_{[S,L],\chi(L)-1-k}^{\chi(L)-1-k}(y)=\Coeff_{q^{g-1}}\frac{\widetilde{DG}_2(y,q)^k \big(D\widetilde{DG}_2(y,q)\big)^{1-\chi(\oO_S)/2}}{\widetilde \Delta(y,q)^{\chi(\oO_S)/2}}.$$
\end{cor}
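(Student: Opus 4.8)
The plan is to extract, from the generating identity of \corref{gsconj}, the coefficient of the lowest power of $x := t + t^{-1} - y^{1/2} - y^{-1/2}$. For $K_S$ numerically trivial one has $L\cdot K_S = 0 = K_S^2$, so $\mathbb B_1^{K_S^2}\mathbb B_2^{LK_S} = 1$ and \corref{gsconj} reads
\[
\sum_{i\ge 0} N^{i}_{[S,L],\chi(L)-1-k}(y)\, x^{g-i-1} \;=\; \Coeff_{q^{g-1}}\bigl[\,\X^{k}\,\K^{\chi(\oO_S)/2}\,\mathbb A^{1-\chi(\oO_S)/2}\,\bigr].
\]
Also for $K_S$ numerically trivial, $g = g(L) = L^2/2 + 1$ and $\chi(L) = L^2/2 + \chi(\oO_S)$, so with $\delta := \chi(L)-1-k$ we have $g - \delta - 1 = k + 1 - \chi(\oO_S)$. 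Since distinct $i$ give distinct powers $x^{g-i-1}$ on the left, $N^{\delta}_{[S,L],\delta}(y)$ is exactly the coefficient of $x^{k+1-\chi(\oO_S)}$ on the right, so it suffices to expand $\X^{k}\K^{\chi(\oO_S)/2}\mathbb A^{1-\chi(\oO_S)/2}$ as a Laurent series in $x$ and read off that coefficient. (One finds that the expansion begins precisely at $x^{k+1-\chi(\oO_S)}$, consistent with the vanishing $N^{i}_{[S,L],\delta} = 0$ for $i > \delta$ recalled above.)

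I would compute the expansion by letting $x \to 0$, i.e.\ $t \to y^{1/2}$ (the other root $t = y^{-1/2}$ yields the same expansion, by the symmetry $t \leftrightarrow t^{-1}$ under which $x$ is invariant). For $\mathbb A$ I would use the second line of \thmref{thm:A}, $\mathbb A = x\sum_{n,d>0} n^2 [d]_y [d]_{ty^{1/2}} [d]_{ty^{-1/2}} q^{nd}$: at $t = y^{1/2}$ we have $[d]_{ty^{1/2}} \to [d]_y$ and $[d]_{ty^{-1/2}} \to [d]_1 = d$, so $\mathbb A = x\, D\widetilde{DG}_2(y,q) + O(x^2)$, using $\sum_{n,d>0} n^2 d [d]_y^2 q^{nd} = D\widetilde{DG}_2(y,q)$. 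Applying $D^{-1}$ term by term in $q$ gives $\H = D^{-1}\mathbb A = x\, \widetilde{DG}_2(y,q) + O(x^2)$, and since $\H \to 0$ while $X_{-y}(0) = 1$, also $\X = \H/X_{-y}(\H) = x\, \widetilde{DG}_2(y,q) + O(x^2)$. For $\K$ I would use the Kawai--Yoshioka formula of \thmref{thm:K}: as $t \to y^{1/2}$ the only vanishing factor in the denominator is $\theta(y^{1/2}/t) = \theta(1)$, a simple zero; writing $t = y^{1/2}(1+\varepsilon)$ one gets $x = (y^{1/2}-y^{-1/2})\varepsilon + O(\varepsilon^2)$ and $\theta(y^{1/2}/t) = -\theta'(0)\,\varepsilon + O(\varepsilon^2)$, so, using also $\theta(ty^{1/2}) \to \theta(y)$ and $(y^{1/2}-y^{-1/2})^2 = y - 2 + y^{-1}$,
\[
\K \;=\; \frac{(y^{1/2}-y^{-1/2})^{2}\,\theta'(0)^{2}}{x\,\Delta(q)\,\theta(y)^{2}} + O(1) \;=\; \frac{1}{x\,\widetilde\Delta(y,q)} + O(1).
\]

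Multiplying, the lowest-order term of $\X^{k}\K^{\chi(\oO_S)/2}\mathbb A^{1-\chi(\oO_S)/2}$ is
\[
x^{\,k+1-\chi(\oO_S)}\cdot\frac{\widetilde{DG}_2(y,q)^{k}\,\bigl(D\widetilde{DG}_2(y,q)\bigr)^{1-\chi(\oO_S)/2}}{\widetilde\Delta(y,q)^{\chi(\oO_S)/2}},
\]
the omitted terms being of strictly higher order in $x$; taking $\Coeff_{q^{g-1}}$ of the coefficient of $x^{k+1-\chi(\oO_S)}$ and comparing with the first display yields the asserted formula for $N^{\delta}_{[S,L],\delta}(y)$. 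I expect the main computational step to be the behavior of $\K$ near $t = y^{1/2}$, i.e.\ the simple-zero expansion of $\theta$ and the bookkeeping that identifies the residue with $\widetilde\Delta^{-1}$; the expansions of $\mathbb A, \H, \X$ and the exponent arithmetic are routine. One minor point to record is that the substitution $t \mapsto$ (a solution of $t + t^{-1} = x + y^{1/2} + y^{-1/2}$) sends $\mathbb A, \H, \X$ into $x\,\Q[y^{\pm 1/2}][x][[q]]$ and $\K$ into $x^{-1}\Q[y^{\pm 1/2}]((q))[[x]]$ (with the evident half-integral modifications when $\chi(\oO_S)$ is odd), so that ``lowest-order term in $x$'' is well defined and commutes with $\Coeff_{q^{g-1}}$.
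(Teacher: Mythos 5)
Your proposal is correct and follows essentially the same route as the paper: both drop the $\mathbb B_i$ factors using $K_S^2 = LK_S = 0$, use the vanishing $N^i_{[S,L],\delta}=0$ for $i>\delta$ to identify $N^\delta_{[S,L],\delta}$ with the coefficient of $x^{k+1-\chi(\oO_S)}$, and then evaluate the leading terms of $\A,\H,\X,\K$ at $t=y^{1/2}$ (i.e. $x=0$), getting $\A/x\to D\widetilde{DG}_2$, $\X/x\to\widetilde{DG}_2$, and $x\K\to 1/\widetilde\Delta$. Your explicit simple-zero expansion of $\theta(y^{1/2}/t)$ just spells out the step the paper compresses into the line ``by Theorem \ref{thm:K} we get $(x\K)|_{t=y^{1/2}}$'' (where, as your computation confirms, the intended value is $1/\widetilde\Delta(y,q)$).
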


More generally, we want expressions for all the $N^i$, or in other words, we want to expand $\A$ and $\K$ in $x$ rather than $t$. 

We define polynomials $s_n(y)$ and their generating function $S(y,x)$ by
\begin{align*}s_n(y)&:=\sum_{k=0}^n\binom{n}{k}^2y^{k-n/2}=\Coeff_{t^n}\big(y^{-1/2}(1+t)(1+ty)\big)^n,\\
S(y,x)&:=\sum_{n\ge 0} (-1)^n \frac{s_n(y)}{(y^{1/2}-y^{-1/2})^{2n+1}} \frac{x^{n+1}}{n+1}.\end{align*}
Then we have 

\begin{thm}\label{higher}
\begin{eqnarray*}
\K & = & \frac{1}{\widetilde\Delta(y,q)}\left(\frac{1}{x}+\frac{1}{y^{1/2}-y^{-1/2}}\sum_{nd>0}\sgn(d)e^{S(y,x) (d-n)} y^dq^{nd}\right), \\
\A & = & \frac{1}{y^{1/2}-y^{-1/2}}\left(\sum_{nd>0}\sgn(d)(e^{S(y,x) d}-1) n^2(y^d-1)q^{nd}\right),\\
\H & = & \frac{1}{y^{1/2}-y^{-1/2}}\left(\sum_{nd>0}\sgn(d)(e^{S(y,x) d}-1) \frac{n}{d}(y^d-1)q^{nd}\right).
\end{eqnarray*} 
\end{thm}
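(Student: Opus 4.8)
The plan is to first pin down the series $S(y,x)$ explicitly, and then substitute into \thmref{thm:A} and \thmref{thm:K}. The key claim is that
$S(y,x)=\log(y^{-1/2}t)$, equivalently $t=y^{1/2}e^{S(y,x)}$, equivalently $x=y^{1/2}(e^{S}-1)+y^{-1/2}(e^{-S}-1)$, where throughout $t$ is regarded as the branch of the function of $x$ defined by $x=t+t^{-1}-y^{1/2}-y^{-1/2}$ with $t=y^{1/2}+O(x)$. To prove this I would compare derivatives. Setting $\sigma(x):=\log(y^{-1/2}t)$ one has $\sigma(0)=0$, and differentiating $t+t^{-1}=x+y^{1/2}+y^{-1/2}$ gives $\sigma'(x)=(t-t^{-1})^{-1}=\big(x^2+2(y^{1/2}+y^{-1/2})x+(y^{1/2}-y^{-1/2})^2\big)^{-1/2}$. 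On the other hand $\frac{d}{dx}S(y,x)=\sum_{n\ge0}(-1)^n s_n(y)(y^{1/2}-y^{-1/2})^{-2n-1}x^n$ by definition, so the identity $S=\sigma$ reduces, after the substitution $w=-x/(y^{1/2}-y^{-1/2})^2$, to the closed form
\[
\sum_{n\ge0}s_n(y)\,w^n=\big(1-2(y^{1/2}+y^{-1/2})w+(y^{1/2}-y^{-1/2})^2w^2\big)^{-1/2}.
\]
This last identity I would prove either from the expression $s_n(y)=\Coeff_{t^0}\big[(y^{1/2}t+y^{-1/2}t^{-1}+y^{1/2}+y^{-1/2})^n\big]$ (obtained by rewriting $y^{-1/2}(1+t)(1+ty)t^{-1}$) via the standard residue evaluation of the $t^0$-coefficient of $(1-w\,\phi(t))^{-1}$, or by recognizing $s_n(y)=(1-y)^n y^{-n/2}P_n\!\big(\tfrac{1+y}{1-y}\big)$ and invoking the Legendre generating function. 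Together with $S(y,0)=\sigma(0)=0$ this gives $S(y,x)=\log(y^{-1/2}t)$.

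Granting this, write $S:=S(y,x)$, so $t^{\pm d}=y^{\pm d/2}e^{\pm dS}$ and $t^d+t^{-d}-y^{d/2}-y^{-d/2}=y^{d/2}(e^{dS}-1)+y^{-d/2}(e^{-dS}-1)$. Substituting into the third expression of \thmref{thm:A} and multiplying out, the summand becomes $\big[(e^{dS}-1)(y^d-1)-(e^{-dS}-1)(y^{-d}-1)\big]/(y^{1/2}-y^{-1/2})$ times $n^2q^{nd}$; this is exactly the combined $(n,d)$- and $(-n,-d)$-contributions (weighted by $\sgn(d)$) to $\tfrac1{y^{1/2}-y^{-1/2}}\sum_{nd>0}\sgn(d)(e^{dS}-1)n^2(y^d-1)q^{nd}$, which is the asserted formula for $\A$. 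Since $\A$ has no $q^0$-term, $\H=D^{-1}\A$ is obtained by dividing the coefficient of $q^{nd}$ by $nd$, i.e.\ replacing $n^2$ by $n/d$; this is compatible with $(n,d)\mapsto(-n,-d)$ and yields the formula for $\H$.

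For $\K$ I would first rewrite \thmref{thm:K}. Using $\Delta(q)=\theta'(0)^8$ and $(y^{1/2}-y^{-1/2})^2=y-2+y^{-1}$ (product formulas for $\theta$ and $\Delta$), the definition of $\widetilde\Delta$ turns the Kawai--Yoshioka formula into $\K=\tfrac{-1}{(y^{1/2}-y^{-1/2})\widetilde\Delta(y,q)}\cdot\tfrac{\theta(y)\theta'(0)}{\theta(ty^{1/2})\theta(y^{1/2}/t)}$. Now apply Zagier's identity \eqref{zagfun} with $y_1=ty^{1/2}$, $y_2=y^{1/2}/t$, so $y_1y_2=y$. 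Its rational part contributes $\tfrac{-1}{y^{1/2}-y^{-1/2}}\cdot\tfrac{y-1}{(ty^{1/2}-1)(y^{1/2}/t-1)}$; since $(ty^{1/2}-1)(y^{1/2}/t-1)=(y+1)-y^{1/2}(t+t^{-1})=-y^{1/2}x$ and $(y^{1/2}-y^{-1/2})y^{1/2}=y-1$, this equals $1/x$. Its sum part is $\tfrac1{y^{1/2}-y^{-1/2}}\sum_{nd>0}\sgn(d)(ty^{1/2})^n(y^{1/2}/t)^dq^{nd}$; relabelling $n\leftrightarrow d$ (legitimate on $\{nd>0\}$, where $\sgn(n)=\sgn(d)$) and using $e^{S}=y^{-1/2}t$ rewrites the general term as $\sgn(d)\,t^{d-n}y^{(n+d)/2}q^{nd}=\sgn(d)\,e^{(d-n)S}y^dq^{nd}$, giving the stated theta-sum for $\K$.

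The one genuine piece of work is the closed form for $\sum_n s_n(y)w^n$ together with the correct choice of branch --- i.e.\ ensuring that $S(y,x)$, which is forced to have vanishing constant term, is $\log(y^{-1/2}t)$ rather than the other root $\log(y^{-1/2}t^{-1})$. Everything else is formal manipulation, though one should be slightly careful to interpret the substitution $t=y^{1/2}e^{S(y,x)}$ as an equality of Laurent series in $x$ (note $\A$, $\H$, $\K$ depend on $t$ only through symmetric functions of $t,t^{-1}$), and to keep the $\sgn(d)$-bookkeeping in the indefinite theta sums straight.
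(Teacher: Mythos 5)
Your proposal is correct and follows the same overall architecture as the paper's proof: rewrite the Kawai--Yoshioka formula in terms of $\widetilde\Delta$, expand via Zagier's identity \eqref{zagfun} with $y_1=ty^{1/2}$, $y_2=y^{1/2}/t$, and substitute into the third line of \thmref{thm:A}. Your key claim $S(y,x)=\log(y^{-1/2}t)$ is exactly the paper's identity \eqref{z2ve}, since $-2\pi i z_2=\log(ty^{-1/2})$ and $\ve=x/(y^{-1/2}-y^{1/2})$; the only genuine divergence is how this is established. The paper applies Lagrange inversion directly to $\ve(2\pi i z_2)$ and reduces to the binomial identity $\sum_k\binom{n}{k}^2\binom{k}{l}=\binom{2n-l}{n}\binom{n}{l}$, whereas you differentiate and reduce to the generating function
\[
\sum_{n\ge 0}s_n(y)\,w^n=\bigl(1-2(y^{1/2}+y^{-1/2})w+(y^{1/2}-y^{-1/2})^2w^2\bigr)^{-1/2},
\]
which is the Legendre generating function after rescaling (or a standard constant-term residue computation). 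Both are complete proofs of comparable difficulty; your route has the advantage of making the closed form $e^{S}=ty^{-1/2}$ explicit from the start, which renders the subsequent substitutions into $\A$, $\H$, $\K$ transparent, while the paper's Lagrange-inversion computation stays entirely at the level of formal power series and avoids any branch discussion. Your branch choice ($t=y^{1/2}+O(x)$, forced by $S(y,0)=0$) and the $\sgn(d)$ bookkeeping in the indefinite theta sums both check out.
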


To see explicitly the development of $\A, \K$ in $x$, we expand
\begin{equation}\label{Pgen}
e^{S(y,x)z}=:\sum_{n\ge 0} P_n(y,z) \frac{x^n}{(y^{1/2}-y^{-1/2})^n}\end{equation}
with $P_n(y,x)\in \Q[\frac{1}{y
-1},y,z],$ \begin{REF}(5) it should be $\Q[\frac{1}{y-1},y,z],$\end{REF}\begin{LG} corrected, although the polynomials are indeed in $\Q[\frac{1}{y
-1},z],$ as said before. This just says that the degree of the numerator in $y$ is not larger than that of the denominator.But this would need a little argument, and here seems to be no point.\end{LG}
\begin{REF}(5) The referee has different coefficients for $P_3$ and $P_4$. \end{REF}
\begin{LG} This is amazing actually, the referee was extremely careful. Yes, there was a misprint, the coefficients were swapped, corrected\end{LG}
e.g.
\begin{align*} P_0&=1,\ P_1=z, \ P_2=\frac{z^2}{2}-\frac{z}{2}\frac{y+1}{y-1},\ 
P_3=\frac{z^3}{6} -\frac{z^2}{2} \frac{y+1}{y-1}+
\frac{z}{3}\frac{y^2+4y+1}{(y-1)^2},\\
P_4&=\frac{z^4}{24}-\frac{z^3}{4}\frac{y+1}{y-1}+\frac{z^2}{24}\frac{11y^2+38y+11}{(y-1)^2}-
\frac{z}{4}\frac{y^3+9y^2+9y+1}{(y-1)^3}.
\end{align*}

\begin{rem} \label{K3Ab} It is remarkable that the generating functions for Abelian and K3 surfaces are determined by the same polynomials $P_i$: Using \corref{gsconj}.
we have on a  K3 surface 
$\sum_{g} N_{[S,L_g]}^{g}(y)q^{g-1}=\frac{1}{\widetilde\Delta(y,q)},$ and 
for $h\ge 1$:
\begin{equation} \label{KKgen}\sum_{g} N_{[S,L_g]}^{g-h}(y)q^{g-1}=\frac{1}{\widetilde\Delta(y,q)}\sum_{nd>0}\sgn(d)\frac{P_{h-1}(y,d-n)}{(y^{1/2}-y^{-1/2})^{h}}y^dq^{nd}.\end{equation}
On an Abelian surface we have for $h\ge 2$, 
\begin{equation}\label{AAgen} \sum_{g} N_{[A,L_g]}^{g-h}(y)q^{g-1}=\sum_{nd>0}\sgn(d)\frac{P_{h-1}(y,d)}{(y^{1/2}-y^{-1/2})^{h}} n^2(y^d-1)q^{nd}.\end{equation}

In fact, we first arrived at the formula asserted in Theorem \ref{thm:A} in the following manner.  The first author conjectured,
on the basis of numerical evidence, that Equations \eqref{KKgen}, \eqref{AAgen} held for some undetermined coefficients $P_i$.  
This suffices in principle to (conjecturally) determine $\A$ from $\K$.  Don Zagier made this determination explicit, providing
a formula for the $P_i$ and for $\A$.  Finally we have reversed the procedure, proving the formula for $\A$ geometrically and deriving
Equations \eqref{KKgen}, \eqref{AAgen} as consequences. 
\end{rem}

\vspace{2mm} {\bf Acknowledgements.}  We thank Don Zagier for the contributions mentioned immediately above,
and  K$\bar{\mathrm{o}}$ta Yoshioka for helpful correspondence about 
sheaves on Abelian surfaces. 
Part of this work was carried out while the first-named author was at the Max-Planck-Institut f\"ur Mathematik, Bonn. 

\section{Universality arguments} \label{sec:universal}

In this section we give the proof of Theorem \ref{thm:genf}. 

\vspace{2mm}  

\begin{defn}
Let $S$ be a surface,  $L$ a line bundle on $S$, and $L^{[n]}$ the corresponding tautological vector bundle on $S^{[n]}$.   
Let  $e^x$ denote a trivial line bundle with nontrivial $\C^*$ action with equivariant first Chern class $x$.  
Then we define\footnote{
Note this differs from \cite{GS} by the normalization by $y^{-\dim /2}$. }
\[
D^{S,L}(x,y,t):=\sum_{n\ge 0} t^n \int_{S^{[n]}} X_{-y}(TS^{[n]})\frac{c_n(L^{[n]}\otimes e^x)}{X_{-y}(L^{[n]}\otimes e^x)}.
\]
\end{defn}

As explained in \cite[Prop. 47]{GS}, for a linear subsystem $\P^\delta \subset |L|$ such that the relative Hilbert schemes $\cC^{[n]}_{[S,L],\delta} \to \P^\delta$
are all smooth -- e.g., a general $\delta$-dimensional linear subsystem when $L$ is  $\delta$-very-ample \cite{KST} -- 
we may extract the $\overline{\chi}_{-y}$ genera by taking a residue: 

\begin{equation}\label{DSLfor}
\sum_{n\ge 0} \overline \chi_{-y}({\cC}_{[S,L],\delta}^{[n]})t^n= \res_{x=0} \left[D^{S,L}(x,y,t)\left(\frac{X_{-y}(x)}{x}\right)^{\delta+1}\right]dx.\end{equation}
Since $D^{S,L}$ is defined by a tautological integral, by \cite{EGL} it depends only on the Chern numbers
$c_2(S), c_1(S)^2, c_1(S).c_1(L), c_1(L)^2$.  Thus we may make sense of it for arbitrary values of these
quantities.  Thus we view Equation \ref{DSLfor} as {\em defining} the quantities 
$\overline \chi_{-y}({\cC}_{[S,L],\delta}^{[n]})$ in terms of $\delta$, $n$, and the Chern numbers of $S, L$,
without any assumptions on even the existence of such a surface and line bundle.  

\begin{REF} (6)  misprint in next formula, corrected\end{REF}
The change of variable 
\[ 
q(x) = \frac{x}{X_{-y}(x)} = \frac{y^{1/2}(1-e^{x(y^{1/2}-y^{-1/2})})}{1-ye^{x(y^{1/2}-y^{-1/2})}}
\] 
is inverse to 
\begin{equation}\label{eq:xq}
x(q) = \frac{\log(1-y^{1/2}q)-\log(1-y^{-1/2}q)}{y^{-1/2}-y^{1/2}}=\sum_{n>0} [n]_y \frac{q^n}{n}.
\end{equation}
We find $\frac{dx}{dq}=\frac{1}{(1-y^{-1/2}q)(1-y^{1/2}q)}$.
Plugging into the residue formula \eqref{DSLfor},
and writing for convenience \[\overline{D}^{S,L}(q,y,t):= D^{S,L}(x(q),y,t),\]
we find
\begin{eqnarray*}
\sum_{n\ge 0} \overline \chi_{-y}({\cC}_{[S,L],\delta}^{[n]})t^{n} & = & 
\res_{q=0}\left[\overline D^{S,L}(q,y,t)q^{-(\delta+1)}\frac{1}{(1-y^{-1/2}q)(1-y^{1/2}q)}\right]\\ 
&= & \Coeff_{q^{\delta}}\left[\overline D^{S,L}(q,y,t)\frac{1}{(1-y^{-1/2}q)(1-y^{1/2}q)}\right].
\end{eqnarray*}
As the term in square brackets is a power series, we may re-sum to obtain
\begin{equation}
\label{DSLfor1}
\sum_{\delta\ge 0}\sum_{n\ge 0} \overline \chi_{-y}({\cC}_{[S,L],\delta}^{[n]})t^n q^{\delta}=\frac{\overline D^{S,L}(q,y,t)}{(1-y^{-1/2}q)(1-y^{1/2}q)}.
\end{equation}

Since $X_{-y}$ is a genus, by \cite{EGL} there exist power series $a_0,a_1,a_2,a_3\in \Q[y^{\pm  1/2}][[t,x]]$
such that $D^{S,L}(x,y,t)=a_0^{\chi(L)}a_1^{K_S^2}a_2^{LK_S} a_3^{\chi(\oO_S)}$ (for a detailed argument,
see \cite[Sec. 3.2]{GS}). Setting $$A_i(q,y,t) := a_i(x(q),y,t) \in \Q[y^{\pm 1/2}][[t,q]],$$ 
we get

\begin{equation}\sum_{\delta\ge 0}\sum_{n\ge 0} \overline \chi_{-y}({\cC}_{[S,L],_\delta}^{[n]})t^{n} q^{\delta}=
A_0^{\chi(L)}A_1^{K_S^2} A_2^{LK_S}A_3^{\chi(\oO_S)}\frac{1}{(1-y^{-1/2}q)(1-y^{1/2}q)}.
\end{equation}
Note that by \eqref{DSLfor1}, the coefficient of $q^0$ in $\overline D^{S,L}(x,y,t)$  
is $\big((1-y^{-1/2}t)(1-y^{1/2}t)\big)^{g-1}$ for $g$ the arithmetic genus of a curve in $|L|$.
Thus  $A_i(q,y,t)\in\big((1-y^{-1/2}t)(1-y^{1/2}t)\big)^{l_i}+q\Q[y^{\pm 1/2}][[t,q]]$ with $l_0=1,l_1=0,l_2=1,l_3=-1$.

If $R$ is a commutative ring, and $f\in R[[q]]$ is an invertible power series, we denote by 
$f^{-1}$ its compositional inverse.
Let
$$\X(q,y,t):=\left(\frac{qt}{A_0}\right)^{-1}\in \Q[y^{\pm 1/2},t^{-1}][[t,q]].$$
This is set up so that 
$$\frac{\X(q,y,t)}{A_0(\X, y, t)} = q/t,$$
and hence $A_0(\X, y, t)=q^{-1}t \X(q, y, t)$. 

Denoting ${\mathbb B}_1(q,y,t):=A_1(\X, y,t)$, ${\mathbb B}_2(q,y,t):=A_2(\X, y,t) q/t$, 
${\mathbb B}_3(q,y,t):=A_3(\X, y,t)t/q$, the substitution $q \mapsto \X$ gives: 

\begin{equation} \label{powd}
\sum_{\delta\ge 0}\sum_{n\ge 0} \overline \chi_{-y}({\cC}_{[S,L],\delta}^{[n]})t^{n+1-g} \X^{\delta}=
\frac{(\X/q )^{\chi(L)}{\mathbb B}_1^{K_S^2} ({\mathbb B}_2/q)^{LK_S}({\mathbb B}_3\cdot q)^{\chi(\oO_S)}}{(1-y^{1/2}\X)(1-y^{-1/2}\X)}.
\end{equation}

As in \cite{G} we use the residue formula. 
Let $R$ be a commutative ring, and $f\in R[[q]]$, $g\in aq+q^2R[[q]]$, with $a$ invertible in $R$, then
$$f=\sum_{k=0}^\infty g(q)^k \left.\left[\frac{f(q)Dg(q)}{g(q)^{k+1}}\right]\right|_{q=0}.$$
We apply this to Equation \eqref{powd} with $g(q)=\X$.  On the one hand,
$\sum_{n\ge 0}\chi_{-y}({\mathcal C}_{[S,L],\delta}^{[n]})t^{n+1-g}$ is the coefficient of $\X^{\delta}$ of the RHS.
On the other, taking the coefficient by the residue formula above gives, with $g$ again the arithmetic genus of a curve in $|L|$,
\begin{align*}
\sum_{n\ge 0}\overline \chi_{-y}({\mathcal C}_{[S,L],\delta}^{[n]})t^{n+1-g} 
&= \left.  \frac{D\X\cdot  \X^{-\delta - 1} \cdot (\X/q )^{\chi(L)}{\mathbb B}_1^{K_S^2} 
 ({\mathbb B}_2/q)^{LK_S}({\mathbb B}_3\cdot q)^{\chi(\oO_S)}}{(1-y^{1/2}\X)(1-y^{-1/2}\X)} 
 \right|_{q=0} 
 \\ &=\mathrm{Coeff}_{q^{g-1}} 
\frac{D\X  \cdot (\X)^{\chi(L) - \delta - 1}{\mathbb B}_1^{K_S^2} 
 {\mathbb B}_2^{LK_S}{\mathbb B}_3^{\chi(\oO_S)}}{(1-y^{1/2}\X)(1-y^{-1/2}\X)}.
\end{align*}

We collect terms with fixed $k=\chi(L)-1-\delta$, i.e. (if we assume that $L$ has now higher cohomology) $k$ is the number of point conditions we impose to cut down to $\P^\delta$.  We now explicitly note the genus of the line bundle appearing in its subscript. \begin{REF} (7) Explain were the lower index in the formula below comes from, also it should by $LK_S$ instead of $-LK_S$.\end{REF}
\begin{LG} The referee is right about the sign: fixed. Added next sentence to explain lower index.\end{LG}
Note that we always have $\delta\ge 0$, which by definition of $k$ and $\chi(L)=g-LK_S-1+\chi(\oO_S)$ translates into $g\ge k+2+LK_S-\chi(\oO_S)$.

\begin{cor}\label{powl} Fix $k\ge 0$, then 
\begin{align*}
\sum_{g\ge k+2+LK_S-\chi(\oO_S)}\sum_{n\ge 0} \overline \chi_{-y}({\mathcal C}_{[S,L_g],\chi(L_g)-1-k}^{[n]})t^{n-g+1} q^{g-1}=\frac{
\X^{k}{\mathbb B}_1^{K_S^2} {\mathbb B}_2^{L_gK_S}{\mathbb B}_3^{\chi(\oO_S)} D\X}{(1-y^{-1/2}\X)(1-y^{1/2}\X)}.
\end{align*}
\end{cor}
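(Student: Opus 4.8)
This is essentially a repackaging of the identity established in the lines just above. There, rewriting $\overline \chi_{-y}$ of the relative Hilbert scheme as the tautological residue \eqref{DSLfor}, making the substitution $q=q(x)$, invoking the \cite{EGL} factorization $D^{S,L}=a_0^{\chi(L)}a_1^{K_S^2}a_2^{LK_S}a_3^{\chi(\oO_S)}$, and applying the residue (Lagrange inversion) formula to \eqref{powd}, we have already obtained, for every $\delta\ge 0$ and every assignment of the four Chern numbers,
\[
\sum_{n\ge 0}\overline \chi_{-y}({\mathcal C}_{[S,L],\delta}^{[n]})\,t^{n-g+1}
=\Coeff_{q^{g-1}}\,\frac{D\X\cdot\X^{\chi(L)-\delta-1}\,{\mathbb B}_1^{K_S^2}{\mathbb B}_2^{LK_S}{\mathbb B}_3^{\chi(\oO_S)}}{(1-y^{1/2}\X)(1-y^{-1/2}\X)},
\]
with $g$ the arithmetic genus attached to those Chern numbers. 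The plan is to fix $k\ge 0$, set $\delta=\chi(L_g)-1-k$ (thereby letting $L=L_g$ range over bundles of genus $g$), and then multiply by $q^{g-1}$ and sum over $g$.

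First I would carry out the substitution $\delta=\chi(L_g)-1-k$. The exponent $\chi(L_g)-\delta-1$ becomes $k$ identically, so the right-hand side is $\Coeff_{q^{g-1}}[F]$ for the single expression
\[
F:=\frac{\X^{k}\,{\mathbb B}_1^{K_S^2}{\mathbb B}_2^{LK_S}{\mathbb B}_3^{\chi(\oO_S)}\,D\X}{(1-y^{-1/2}\X)(1-y^{1/2}\X)}.
\]
The key observation is that $F$ does not depend on $g$: by construction $\X(q,y,t)$ and ${\mathbb B}_i(q,y,t)=A_i(\X,y,t)$ are built only from the universal series $a_i$ of \cite{EGL} (which depend solely on the genus $X_{-y}$), and the exponents $K_S^2,\ LK_S,\ \chi(\oO_S)$ are among the Chern numbers we hold fixed; among $c_1(S)^2,\ c_2(S),\ c_1(S).c_1(L),\ c_1(L)^2$, only the last varies with $g$ (via $2g-2=L^2+LK_S$), and it enters neither $\X$, nor the ${\mathbb B}_i$, nor the exponents. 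Hence, after multiplying the displayed identity by $q^{g-1}$ and summing over $g$, the right-hand side collapses to $F$ itself, $\sum_{g}\big(\Coeff_{q^{g-1}}F\big)\,q^{g-1}=F$, which is precisely the asserted formula.

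Last, I would check the range of summation, i.e. that the left-hand summand vanishes for $g$ below the bound in the statement, equivalently that $F$, as a Laurent series in $q$, has no term in a degree low enough to contribute. This comes from the $q$-valuations derived above: $v_q(\X)=v_q(D\X)=1$, and from the constants $l_0=1,\ l_1=0,\ l_2=1,\ l_3=-1$ for the $q^{0}$-parts of the $A_i$ one gets $v_q({\mathbb B}_1)=0$, $v_q({\mathbb B}_2)=1$, $v_q({\mathbb B}_3)=-1$; since the denominator of $F$ is a $q$-unit, $v_q(F)=k+1+LK_S-\chi(\oO_S)$. For a genuine $\delta$-very-ample line bundle with no higher cohomology the resulting constraint is just $\dim|L_g|\ge k$. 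I do not expect a genuine obstacle here: all the substantive input — the universality of the $a_i$ and the Lagrange/residue computation — is already in hand, and this valuation count is the only point that needs more than a line.
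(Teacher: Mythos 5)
Your proposal is correct and follows essentially the same route as the paper: the corollary is obtained exactly by setting $\delta=\chi(L_g)-1-k$ in the Lagrange-inversion identity derived just above it, observing that the resulting expression is independent of $g$ (only $L^2$ varies, and it no longer appears once $\chi(L)-\delta-1=k$), and resumming over $g$. Your $q$-valuation count $v_q(F)=k+1+LK_S-\chi(\oO_S)$ is a worthwhile extra check; note it yields the bound $g\ge k+2+LK_S-\chi(\oO_S)$, consistent with the remark after Theorem~\ref{thm:genf}, so the sign of $LK_S$ in the summation range as printed in the corollary is a typo.
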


In particular, when $S = A$ is an Abelian surface,
$$
\frac{D\X}{(1-y^{-1/2}\X)(1-y^{1/2}\X)}=
\sum_{g\ge 2}\sum_{n\ge 0}\overline  \chi_{-y}({\mathcal C}_{[A,L_g],\chi(L_g)-1}^{[n]})t^{n-g+1} q^{g-1}=:\mathbb{A}.$$
Note that
$$\frac{D\X}{(1-y^{-1/2}\X)(1-y^{1/2}\X)}=D\left(\frac{\log(1-y^{-1/2}\X)-
\log(1-y^{1/2}\X)}{y^{-1/2}-y^{1/2}}\right).$$
Thus, by $\H=D^{-1}\A$, we see
\begin{equation}\label{eq:HX}\H=\frac{\log(1-y^{1/2}\X)-
\log(1-y^{-1/2}\X)}{y^{-1/2}-y^{1/2}}=x(\X).\end{equation}
We have already seen  how to invert this function:
\begin{equation}
\X= q(\H)=\frac{\H}{X_{-y}(\H)} = \frac{y^{1/2}(1-e^{\H (y^{1/2}-y^{-1/2})})}{1-ye^{\H (y^{1/2}-y^{-1/2})}}.
\end{equation}

Similarly, when $S = K$ is a K3 surface,
$$
\mathbb{A} \mathbb{B}_3^2 =
\sum_{g\ge 0}\sum_{n\ge 0}\overline  \chi_{-y}({\mathcal C}_{[K,L_g],\chi(L_g)-1}^{[n]})t^{n-g+1} q^{g-1}=:\mathbb{K},$$
and so $\mathbb{B}_3 = (\K / \A)^{1/2}$. Putting everything together, this proves the first formula of \thmref{thm:genf}.

We now prove the second formula.  The argument takes place for fixed $[S, L]$.
Write $H$ for the pullback of the hyperplane class from $|L|$.
Denote $$Z_{[S,L]}(x,y,t):=t^{1-g}D^{S,L}(x,y,t) q(x)^{-\chi(L)}.$$
Equation \eqref{DSLfor}  asserts that when the relevant spaces are smooth, we have 
\begin{align*}
\sum_{n\ge 0} \overline \chi_{-y}({\cC}_{[S,L],\chi(L)-1-k}^{[n]})t^{n+1-g} &=\res_{x=0} \left[t^{1-g} D^{S,L}(x,y,t) q(x)^{-\delta-1}\right]dx\\&=\res_{x=0} \left[Z_{[S,L]}(x,y,t)  q(x)^{k}\right]dx.
\end{align*}
By the same proof, if the $\cC_{[S,L]}^{[n]}$ are smooth,  we have 
$$\sum_{n\ge 0} \left(\int_{{\mathcal C}_{[S,L]}^{[n]}} X_{-y}({\mathcal C}_{[S,L]}^{[n]})\cap H^k\right)t^{n+1-g}= \res_{x=0} \left[Z_{[S,L]}(x,y,t)x^k \right]dx.$$
Write $f(q,y,t):={\mathbb B}_1^{K_S^2} {\mathbb B}_2^{LK_S}\K^{\chi(\oO_S)/2}\A^{1-\chi(\oO_S)/2}$. 
We have shown
\[\res_{x=0} \left[Z_{[S,L]}(x,y,t)  q(x)^{k}\right]dx=\Coeff_{q^{g-1}}\left[\X^{k}f(q,y,t)\right].
\]
Let again $x(q)$ from \eqref{eq:xq} be the compositional inverse of $q(x)$. 
Write $x(q)^k:=\sum_{l\ge k} a_l(y) q^l$, such that $\sum_{l\ge k} a_l(y)q(x)^l=x^k.$
Thus we get
\begin{align*}\res_{x=0}& \left[Z_{[S,L]}(x,y,t)x^k \right]dx= \res_{x=0} \left[\sum_{l\ge k} a_l(y) Z_{[S,L]}(x,y,t)q(x)^l\right]dx\\&=\Coeff_{q^{g-1}}\left[\sum_{l\ge k} a_l(y)\X^{l}f(q,y,t)\right]=\Coeff_{q^{g-1}}\left[x(\X)^k f(q,y,t)\right]=\Coeff_{q^{g-1}}\left[\H^k f(q,y,t)\right].
\end{align*}
The last equality is by \eqref{eq:HX}.

\section{Calculations for the Abelian surface} \label{sec:Yoshioka}

Kawai and Yoshioka determined $\K$ by comparing
various moduli spaces of stable sheaves and stable pairs on a K3 surface
\cite{KY}.  A modification of their argument suffices to determine $\A$ except for the coefficient
of $t^0$, and 
a vanishing result in \cite{GS} allows us to determine this
coefficient from the rest. 

\subsection{Yoshioka's lemma}

\begin{lem} \cite[Lem. 2.1]{Y-reflections}  Let $X$ be a smooth projective surface with polarization $H$, and let $C$ be a curve class minimizing
$C.H$.  For a sheaf $F$ with $c_1(F) = dC$, we write
$\deg(F) = d = (c_1(F).H)/(C.H)$.  Let $(r, d)$ and $(r_1, d_1)$ be pairs of integers such that $r_1 d - d_1 r = 1$, with $r \ge 0$ and $r_1 > 0$.  Let
$(r_2, d_2) := (r, d) - (r_1, d_1)$.  Below let $E_i$ be of rank $r_i$ and degree $d_i$, and let $E_1$ always be a vector bundle. 
\begin{itemize} 
	\item If $E_1,$ $E_2$ are $\mu$-stable , then every nontrivial extension \[0 \to E_1 \to E \to E_2 \to 0\] is $\mu$-stable.
	\item If $E_1, E$ are $\mu$-stable, then for any vector
	subspace $V \subset \mathrm{Hom}(E_1, E)$, if the evaluation map $V \otimes E_1 \to E$ is not injective, then it is surjective in codimension $1$. 
	Moreover 
	\begin{itemize}
		\item If $V \otimes E_1 \to E$ is injective, then the cokernel is $\mu$-stable.
		\item if $V \otimes E_1 \to E$ is surjective in codimension 1, the kernel is $\mu$-stable. 
	\end{itemize} 
\end{itemize} 
\end{lem}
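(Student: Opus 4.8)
## Proof Plan for Yoshioka's Lemma

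The plan is to reduce everything to the two fundamental facts about $\mu$-stability in this setting: the slope $\mu(F) = (c_1(F).H)/\rk(F)$ is the only invariant that matters for destabilizing subsheaves, and the coprimality condition $r_1 d - d_1 r = 1$ forces the relevant slopes to be ``as close as possible,'' so that no sheaf of intermediate rank can have intermediate slope. Concretely, I first record the numerology: writing $\mu_i = d_i/r_i$ (with the convention that $\mu = d/r = \infty$ when $r=0$, handled by clearing denominators throughout), the relation $r_1 d_1 - $ wait, $r_1 d - d_1 r = 1$ with $(r_2,d_2) = (r,d)-(r_1,d_1)$ gives $r_1 d_2 - d_1 r_2 = r_1(d-d_1) - d_1(r-r_1) = r_1 d - d_1 r = 1$ and $r_2 d - d_2 r = (r-r_1)d - (d-d_1)r = d_1 r - r_1 d = -1$. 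So $E_1$ and $E_2$ have slopes straddling that of $E$, with the ``denominators'' $r_1, r_2$ controlling how close; the key inequality is that any proper subsheaf $G \subset E$ with $0 < \rk G < \rk E$ satisfies $\mu(G) \le \mu(E)$ with equality impossible once one tracks the integrality of $r_1 d - d_1 r$-type expressions.

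For the first bullet: given a nontrivial extension $0 \to E_1 \to E \to E_2 \to 0$ with $E_1, E_2$ both $\mu$-stable, suppose $G \subset E$ is a subsheaf with $\mu(G) \ge \mu(E)$ and $0 < \rk G < \rk E$; I would intersect with $E_1$, getting $0 \to G \cap E_1 \to G \to G' \to 0$ with $G' \subset E_2$. Stability of $E_1$ gives $\mu(G\cap E_1) \le \mu(E_1)$ (strict unless $G \cap E_1 = 0$ or $= E_1$) and stability of $E_2$ gives $\mu(G') < \mu(E_2)$ unless $G' = E_2$. Combining with $\mu(G) \ge \mu(E)$ and the fact that $\mu(E)$ is the ``rank-weighted average'' of $\mu(E_1), \mu(E_2)$, a short case analysis on the ranks of $G \cap E_1$ and $G'$, using the coprimality $r_1 d - d_1 r = 1$ to exclude the borderline cases, forces either $G = E_1$ (but then $G$ is a proper subsheaf of strictly smaller rank and slope, fine — this doesn't destabilize) or $G' = E_2$ with $G \cap E_1 = E_1$, i.e. $G = E$, contradiction; or one reaches $G = E_1$ as a section splitting the sequence, contradicting nontriviality. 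One must also check torsion-freeness / that $E$ has no torsion, which is immediate since $E_1, E_2$ are torsion-free.

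For the second bullet: with $E_1, E$ both $\mu$-stable and $V \subset \Hom(E_1, E)$, consider the evaluation $\mathrm{ev}\colon V \otimes E_1 \to E$. Let $K = \ker(\mathrm{ev})$ and $I = \im(\mathrm{ev})$. Since $E_1$ is $\mu$-stable and $V \otimes E_1$ is a direct sum of copies of $E_1$, any quotient of $V \otimes E_1$ has slope $\ge \mu(E_1)$ with a subsheaf-of-$E$ story, while $I \subset E$ forces $\mu(I) \le \mu(E)$; the relation $r_2 d - d_2 r = -1$, i.e. $\mu(E_1) < \mu(E)$ by exactly the coprime gap, then pins $\rk I$: either $\rk I = \rk E_1$ (so $\mathrm{ev}$ factors through a single copy generically — but injectivity of a nonzero map $E_1 \to E$ of equal rank would need $\rk V$ bookkeeping) giving injectivity of $\mathrm{ev}$, or $\rk I = \rk E$, i.e. surjectivity in codimension $1$. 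In the injective case, $0 \to V\otimes E_1 \to E \to \coker \to 0$ is an extension of the first type after identifying $\coker$ has rank $r_2'$ and degree $d_2'$ with $(r_1', d_1')\defeq (\rk V \cdot r_1, \rk V \cdot d_1)$ still coprime-adjacent to $(r,d)$ — wait, one needs $r_1' d - d_1' r = \rk V$, so for $\rk V = 1$ this is exactly the first bullet and $\coker$ is $\mu$-stable; the general $\rk V$ case reduces to $\rk V = 1$ since injectivity of $\mathrm{ev}$ on $V \otimes E_1$ forces $\dim V \le $ (generic rank of $E$)/$\rk E_1 = 1$ when slopes are adjacent. In the surjective-in-codimension-$1$ case, dualize (or pass to the reflexive hull) to turn the kernel into a cokernel of the dual evaluation and apply the injective case.

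The main obstacle I anticipate is handling the rank-zero case $r = 0$ cleanly — when $E$ (or rather the relevant sheaf) is a pure torsion sheaf supported on curves, ``$\mu$-stable'' must be interpreted via the leading term of the Hilbert polynomial / the reduced Hilbert polynomial, and several of the slope inequalities above become statements about that normalization; one should either treat $r=0$ as a genuinely separate (and actually easier, since one-dimensional) case, or set up a uniform notion of slope as a point of $\mathbb{P}^1$ and argue with the cross-ratio-type integrality $r_1 d - d_1 r = 1$ throughout. Secondarily, the phrase ``surjective in codimension $1$'' requires care: $\coker(\mathrm{ev})$ is then a torsion sheaf of dimension $\le 1$, and asserting the kernel is $\mu$-stable needs the observation that modifying $E$ in codimension $\ge 2$ (or along a curve, for the pure case) does not change the slope, which is exactly where Yoshioka's original argument invokes that $C.H$ is minimal.
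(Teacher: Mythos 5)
First, for calibration: the paper does not prove this lemma at all --- it is imported verbatim from Yoshioka \cite{Y-reflections}, and the only in-paper content is the remark that Yoshioka's proof, written for $NS(X)=\Z$, goes through under the stated hypothesis. So you are supplying an argument where the paper supplies a citation. Your skeleton is the right one (the relations $r_1d_2-d_1r_2=1$ and $r_2d-d_2r=-1$, hence $\mu(E_1)<\mu(E)<\mu(E_2)$ are Farey neighbours, combined with intersecting a destabilizing subsheaf with $E_1$), but the plan has genuine gaps. The most structural one is that you locate the role of the minimality of $C.H$ only in the final codimension-one bookkeeping. Its real job is earlier and everywhere: it guarantees that $\deg(G)=c_1(G).H/(C.H)$ is an \emph{integer} for every subsheaf and every torsion quotient appearing in the argument (the achieved values $\xi.H$ form a subgroup of $\Z$ whose positive generator is $C.H$), and that a torsion sheaf supported on a curve has degree $\ge 1$. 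Without this integrality, every one of your ``borderline case'' exclusions fails --- e.g.\ for $G\subset E$ with $G\cap E_1=0$ and $0<\rk G<r_2$, stability of $E_2$ only puts $\mu(G)$ in the interval $[\mu(E),\mu(E_2))$ of length $1/(rr_2)$, and it is the integrality of $\deg(G)$ together with the Farey property that empties it.

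Two concrete steps do not work as written. (i) In the first bullet, the dangerous case is not ``$G'=E_2$, splitting the sequence'': the slope argument in the case $G\cap E_1=0$, $\rk G=r_2$ only forces $\deg G'=d_2$, i.e.\ $G'\subset E_2$ of full rank with $E_2/G'$ of \emph{finite length}. Such a $G$ splits the extension pulled back along $G'\hookrightarrow E_2$, and to conclude that the original extension class vanishes you need $\Ext^1(E_2,E_1)\to\Ext^1(G',E_1)$ to be injective, i.e.\ $\Ext^1(E_2/G',E_1)=0$ for a finite-length sheaf. This is exactly where the hypothesis that $E_1$ is a vector bundle enters, and that hypothesis is used nowhere in your plan. (ii) Your reduction of the cokernel statement to $\dim V=1$ is false: injectivity of $V\otimes E_1\to E$ only forces $\dim V\le r/r_1$ (take $E_1=\oO_X$, $r_1=1$, $r=3$), and for $v=\dim V\ge 2$ the pair $(vr_1,vd_1)$ satisfies $vr_1d-vd_1r=v\ne 1$, so the first bullet does not apply to the cokernel directly. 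One must instead induct on $v$, peeling off one copy of $E_1$ at a time so that each intermediate cokernel again satisfies the determinant-one condition relative to $(r_1,d_1)$; a similar care (and the locally-free hypothesis on $E_1$, to control the reflexive hull) is needed to make your dualization argument for the kernel precise. Your flagged worry about the $r=0$ case is legitimate but comparatively minor; the two items above are the real holes.
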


\begin{rem} In \cite{Y-reflections}, this lemma is proven under the assumption that $NS(X) = \Z$, but in \cite{KY} it is pointed out that the proof only
requires the assumption stated above.
\end{rem}

Note that if $d = 1$, i.e. we are looking at sheaves with $c_1 = C$, then the condition $r_1 d - d_1 r = 1$ is always satisfied by $(r_1, d_1) = (1,0)$, 
hence we may always take $E_1 = \oO_X$.  We now extract explicitly the special cases we will be concerned with.

\begin{cor}
	Let $X$ be a smooth projective surface with polarization $H$, and let $C$ be a curve class minimizing $C.H$.
   \begin{itemize}
   	\item Assume $F$ is $\mu$-stable and $c_1(F) = C$.  Then every nontrivial extension $0 \to \oO_X \to E \to F \to 0$ is $\mu$-stable. 
	\item Assume $E$ is $\mu$-stable of positive rank and $c_1(E) = C$.  Then any non-zero section induces an exact sequence 
	$0 \to \oO_X \to E \to F \to 0$ and $F$ is $\mu$-stable. 
   \end{itemize}
   \begin{REF} (8) Stable means $\mu$-stable.\end{REF}
\end{cor}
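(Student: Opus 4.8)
The plan is to deduce both statements directly from Yoshioka's lemma by specializing to the case $(r_1,d_1)=(1,0)$, so that $E_1=\oO_X$, exactly as flagged in the remark just before the corollary. Throughout I normalize $\deg$ by $\deg(G)=(c_1(G).H)/(C.H)$, so that any sheaf with $c_1=C$ has degree $1$, while $\oO_X$ has degree $0$ and is $\mu$-stable for the trivial reason that it is torsion-free of rank $1$ and so admits no destabilizing subsheaf.

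For the first item I would take a nontrivial extension $0\to\oO_X\to E\to F\to 0$ with $F$ $\mu$-stable and $c_1(F)=C$, and set $(r,d)=(\rank E,\deg E)$. Since $c_1(E)=c_1(\oO_X)+c_1(F)=C$, one has $d=1$; with $(r_1,d_1)=(1,0)$ the condition $r_1d-d_1r=1$ holds, $r_1=1>0$, and $r=\rank F+1\ge 0$, while $(r_2,d_2)=(r-1,1)$ are exactly the rank and degree of $F=E_2$. As $E_1=\oO_X$ is a $\mu$-stable vector bundle and $E_2=F$ is $\mu$-stable by hypothesis, the first bullet of Yoshioka's lemma applies verbatim and gives that $E$ is $\mu$-stable, hence stable.

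For the second item I would take $E$ $\mu$-stable of positive rank with $c_1(E)=C$ (so $\deg E=1$) and a nonzero section $s\in H^0(E)=\Hom(\oO_X,E)$, and set $V=\C\cdot s\subset\Hom(\oO_X,E)$. The evaluation map $V\otimes\oO_X\to E$ is literally $s\colon\oO_X\to E$; its kernel is a rank-$0$, hence torsion, subsheaf of the torsion-free sheaf $\oO_X$, so $s$ is injective and we are in the ``injective'' branch of the lemma's dichotomy. Applying the second bullet of Yoshioka's lemma with $(r,d)=(\rank E,1)$, $(r_1,d_1)=(1,0)$, $E_1=\oO_X$ (the numerical hypotheses are checked just as above, and $E_1,E$ are $\mu$-stable), the cokernel $F$ of $s$ is $\mu$-stable, and $0\to\oO_X\to E\to F\to 0$ is exact by construction.

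The whole argument is bookkeeping, and I do not expect a genuine obstacle: the only points that need care are that the numerical hypotheses of the lemma are met by the choice $(r_1,d_1)=(1,0)$ — which is forced the moment $c_1=C$ makes the degree equal to $1$ — and, in the second item, that a nonzero section of a torsion-free sheaf is automatically injective as a sheaf map, so that we land in the cokernel case of the lemma rather than the ``surjective in codimension one'' case. The only convention to pin down is that ``stable'' in the corollary is to be read as ``$\mu$-stable'', which is precisely what the lemma produces.
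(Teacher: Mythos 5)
Your proof is correct and follows essentially the same route as the paper: both arguments simply specialize Yoshioka's lemma to $(r_1,d_1)=(1,0)$, $E_1=\oO_X$, and observe that the only point needing care is that in the second item the section $\oO_X\to E$ is genuinely injective (so that one lands in the cokernel branch of the lemma). The paper rules out the ``surjective in codimension one'' alternative by a rank count, whereas you invoke torsion-freeness of the $\mu$-stable sheaf $E$ directly; these are the same observation in different clothing.
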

\begin{proof}
The only thing which is not immediate from the lemma is to check is the possibility 
in the second case that $\oO_X \to E$ is surjective in codimension $1$ rather than being injective.  But then in any case $E$ must
either be torsion (which it is not by assumption) or the map from $\oO_X$ must be an isomorphism in codimension $1$, in which
case the kernel must be a torsion subsheaf of $\oO_X$, hence zero. 
\end{proof}

Let $\mM(r,d,e)$ denote the moduli space of semistable sheaves of rank $r$, degree $d$, and Euler number $e$. \footnote{Note we are not
indexing by the Mukai vector.}  
We will below always assume that $\mM(r,d,e)$ only consists of $\mu$-stable sheaves. 
Let $\Syst^1(r,d,e)$ be the space of ``coherent systems'' \cite{LeP}, i.e. it parameterizes a stable sheaf (of rank $r$, degree $d$, 
and Euler number $e$) plus a section, up to isomorphism.  
This corresponds to a special choice of the stability condition for pairs, which ensures that a pair of sheaf and section is stable, if and only if the sheaf is stable. 
There is a forgetful map $\Syst^1(r,d,e) \to \mM(r,d,e)$ with fibre $\CP \mathrm{H}^0(E)$ over a sheaf $E$. 

The above corollary implies the existence of another map: 
 
 \begin{cor}
   For $r \ge 0$, there exists a morphism $\Syst^1(r+1,d,e + \chi(\oO_X)) \to \mM(r,d,e)$ which takes
   $\oO_X \to E$ to its cokernel.  The fibre over a sheaf $F$ is $\CP \mathrm{Ext}^1(F, \oO_X)$. 
 \end{cor}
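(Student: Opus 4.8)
The plan is to define the map first on points via the corollary just proved, then promote it to a morphism by checking that the fibrewise cokernel propagates in flat families, and finally compute the fibres by classifying extensions.

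First I would treat the underlying point map. A point of $\Syst^1(r+1,d,e+\chi(\oO_X))$ is a $\mu$-stable sheaf $E$ of rank $r+1$, degree $d$, Euler number $e+\chi(\oO_X)$, together with a nonzero section $\oO_X \xrightarrow{s} E$ (up to scalar). By the second bullet of the corollary preceding the statement, $s$ is injective as a map of sheaves and $F:=\coker(s)$ is stable. The exact sequence $0 \to \oO_X \to E \to F \to 0$ then gives $\rank F = r$, $c_1(F)=c_1(E)$ hence $\deg F = d$, and $\chi(F)=\chi(E)-\chi(\oO_X)=e$; since by our standing assumption $\mM(r,d,e)$ parameterizes only $\mu$-stable sheaves, $F$ is a point of $\mM(r,d,e)$. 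This is well defined on all of $\Syst^1(r+1,d,e+\chi(\oO_X))$ because injectivity of $s$ always holds in this situation.

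Next I would check this works in families. Let $T$ be a scheme and let $\bigl(\oO_{X\times T} \xrightarrow{\sigma} \eE\bigr)$ be a $T$-family of such coherent systems, so $\eE$ is flat over $T$ with each fibre $E_t$ a point of $\Syst^1(r+1,d,e+\chi(\oO_X))$. Set $\fF := \coker(\sigma)$. Restricting the sequence $0 \to \oO_{X\times T} \to \eE \to \fF \to 0$ to the fibre over $t$ yields, using flatness of $\eE$, an exact sequence
\[
0 \to \mathrm{Tor}_1^{\,T}(\fF,k(t)) \to \oO_X \to E_t \to F_t \to 0,
\]
and the injectivity of $\oO_X \to E_t$ established above forces $\mathrm{Tor}_1^{\,T}(\fF,k(t))=0$ for every $t$. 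By the local criterion of flatness $\fF$ is flat over $T$, with fibres $F_t=\coker(\sigma_t)$; thus it is a $T$-family of stable sheaves of the prescribed invariants. Since $\mM(r,d,e)$ corepresents the moduli functor of such sheaves, this produces a morphism $T \to \mM(r,d,e)$, natural in $T$; the construction is insensitive to twisting $\sigma$ by a line bundle on $T$, so it is independent of the chosen representative of the coherent system. Applying this to $T=\Syst^1(r+1,d,e+\chi(\oO_X))$ with its universal family gives the desired morphism.

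Finally I would identify the fibre over a point $[F]\in\mM(r,d,e)$. Such a fibre consists of the pairs $\bigl(\oO_X \hookrightarrow E\bigr)$ with cokernel $F$, i.e.\ of extension classes in $\Ext^1_X(F,\oO_X)$: the zero class gives $E=\oO_X\oplus F$, which is not stable and hence not a point of $\Syst^1$, while every nonzero class gives a stable $E$ by the first bullet of the corollary preceding the statement. Two classes define isomorphic coherent systems exactly when they differ by a scalar, since the automorphisms of $\oO_X$ and of the stable sheaf $F$ are scalars and an isomorphism of the pairs must be compatible with $s$. Hence the fibre is $\bigl(\Ext^1_X(F,\oO_X)\setminus 0\bigr)/\C^\ast=\CP\,\Ext^1_X(F,\oO_X)$, as claimed.

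I expect the main obstacle to be the family argument of the third paragraph: one must be careful that the fibrewise cokernel really is flat over the base and that the resulting natural transformation descends to a morphism of the (coarse) moduli schemes, given the precise definition of $\Syst^1$ as in \cite{LeP}; the point-level statement and the fibre computation are then immediate from the preceding corollary together with the classification of extensions. The case $r=0$ should be checked separately, but it is covered by the same corollary, reading ``stable'' as stability for pure one-dimensional sheaves.
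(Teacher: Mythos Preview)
Your proposal is correct and follows exactly the line the paper intends: the paper states this corollary without proof, treating it as an immediate consequence of the preceding corollary (injectivity of the section and stability of the cokernel) together with the first bullet of Yoshioka's lemma (stability of nontrivial extensions). You have supplied precisely those details, plus the standard flatness/Tor argument needed to globalize the pointwise construction to a morphism of moduli spaces, and the bookkeeping on automorphisms for the fibre identification; none of this appears in the paper but all of it is routine and your treatment is sound. One small remark: as in the paper, the hypotheses of the preceding corollary force $c_1(E)=C$, i.e.\ $d=1$, so your argument (like the paper's statement) is really only for that case---which is the only one used downstream.
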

 
 Let us consider the space $\Syst^1(0, 1, e)$.  This by definition consists of a stable, rank zero sheaf $E$ together with a section 
 $\oO_X \to E$.   By stability, 
$E$ is a pure sheaf supported on a curve (i.e. torsion free with rank one on its support).  As explained in \cite[Appendix B]{PT3}, dualizing gives an isomorphism between
$\Syst^1(0,1,e)$ and the relative Hilbert scheme  \begin{REF} which Hilbert scheme, what is the base, need the dimension later.\end{REF} of degree $e + g - 1$, 
where $g$ is the arithmetic genus of the support of $E$, over the moduli space $\mathcal M$ of curves of degree $dC$ on $S$.
\begin{LG} The last half sentence is added. \end{LG}

\subsection{A relation between moduli spaces}

We now specialize to $K_X = \oO_X$.  Note in this case that if $E$ is any stable sheaf with zero rank or positive first Chern class, then 
\[\mathrm{H}^2(E) = \mathrm{Hom}(E,K_X)^* = \mathrm{Hom}(E,\oO_X)^* = 0.\]  In the zero rank case the last equality is obvious; 
for positive rank it is ensured by stability.  Additionally we have $\mathrm{Ext}^1(F, \oO_X) = \mathrm{H}^1(F)^*$ by Serre duality.  Thus
the dimensions of the fibres of the two maps to $\mM(r,d,e)$ are related: 
\[\Syst^1(r,d,e) \xrightarrow{\P \mathrm{H}^0(F)} \mM(r,d,e) \xleftarrow{\P \mathrm{H}^1(F)} \Syst^1(r+1,d,e + \chi(\oO_X)). \] 
We indicate throughout the first map by $\Syst \to \mM$ and the second by 
$\mM \leftarrow \Syst$.  

We denote the Hodge polynomial of $V$ by $[V]$. 
We write $\L$ for the Hodge polynomial of the affine line, 
and $[n] = [\P^{n-1}]$ for $n \in \Z_{>0}$.  We also write
$[0] = 0$ and $[-n] = -\L^{-n} [n]$.  

Let $\mM(r,d,e)_s$ denote the locus with $h^0 = s$.  
Since the map $\Syst \to \mM$ is given on the above 
strata as the projectivization of a vector bundle, we have:
\[ [\Syst^1(r,d,e)] = \sum_i [e+ i] [\mM(r,d,e)_{e+i}] .\]
Considering instead the map $\mM \leftarrow \Syst$ and 
using the vanishing of $h^2(F)$ to write $\chi(F) = h^0(F) - h^1(F)$, we have: 
\[ [\Syst^1(r+1,d,e+\chi(\oO_X))]   =  \sum_i [i] [\mM(r,d,e)_{e+i}].  \]
As observed in \cite{KY}, this establishes a recursion:
\begin{eqnarray*}
[\Syst^1(r,d,e)] & = & \sum_i [e+ i] [\mM(r,d,e)_{e+i}]\\
& = & [e][\mM(r,d,e)] + \L^e \sum [i] [\mM(r,d,e)_{e+i}] \\
& = & [e][\mM(r,d,e)] +\L^e [\Syst^1(r+1,d,e+\chi(\oO_X))].
\end{eqnarray*}
Because the dimension of $\Syst$ contains a term $-re$, iterating this
leads to empty moduli spaces $\Syst$ when either (1)
we are working on the a K3 surface where $e$ is increased at each step
by $\chi(\oO_X) = 2$, or (2) when $e > 0$ and we are on the Abelian surface.\footnote{When $e = 0$ on 
the Abelian surface, we learn that $[\Syst^1(r, d, 0)]$ is independent of $r \ge 0$, but
we have not found any use for this fact.}
In these cases we may sum the recursion (which is to say, the following sum is really a finite sum):
\begin{equation} \label{eq:rec}
[\Syst^1(r,d,e)] = \sum_{b=0}^\infty [e+b\chi(\oO_X)]\L^{\sum_{j=0}^{b-1} e + j\chi(\oO_X)} [\mM(r+b, d, e + b \chi(\oO_X))].
\end{equation}

To evaluate this sum, It remains to (1) use the deformation equivalence of moduli of sheaves on K3 or Abelian surfaces and 
Hilbert schemes of points on these surfaces and then (2) plug in the formula for the Hodge polynomial of the Hilbert scheme \cite{G1}. 
For the K3 surfaces, this is done in \cite{KY}.  We proceed now to the case of the Abelian surface, where one must moreover
deal with the $e \le 0$ case in some other way.

\subsection{Abelian surfaces}

Let $A$ be an Abelian surface.  

We change notation slightly from the previous section, and write 
$\mM_{num}(r,d,e)$ for what was written there $\mM(r,d,e)$: the moduli 
space of sheaves where $c_1 = d$ is fixed only in cohomology. 
We now denote $\mM(r,C,e)$  the moduli space where $c_1 = C$ is fixed in
Pic, and similarly for the spaces $\Syst$.  Note the discussion there
 for $\mM_{num}, \Syst_{num}$ is equally valid for (what is here called) $\mM, \Syst$ 
 and thus Equation \ref{eq:rec} holds for these spaces as well.  

Twisting by line bundles gives an isomorphism $A^\vee/A^\vee [r] 
\times \mM(r, C, e) \cong \mM_{num}(r, d, e)$.
The space $\mM_{num}(r,d,e)$ has dimension
\[\dim \mathrm{Ext}^1(F, F) = 2 - \chi(F,F) = 2 - \int \mathrm{ch}(F)^\vee \mathrm{ch}(F) = 2 + C^2 - 2 r e = 2g(C) - 2 r e.\]
If this is greater than $2$, then according to \cite[Thm. 0.1]{Y-abelian}, \begin{REF} It should be $\mM_{num}$: fixed.\end{REF} $\mM_{num}(r,d,e)$ is deformation equivalent to 
$A^\vee \times A^{[n]}$ for the appropriate $n$.  

For $(g-1) > re$ we have 
$[\mM_{num}(r,d,e)] = [A^\vee \times A^{[g-1-re]}]$ and 
$[\mM(r,C,e)] = [A^{[g-1-re]}]$.  In particular, in this case 
$\chi_{-y}(\mM(r,C,e)) = 0$. 
On the other hand, according to \cite[Lem. 4.19]{Y-abelian}, 
when $r | (g-1)$, then $\mM(r,C,\frac{g-1}{r} )$ is a finite set of 
$r^2$ points. 
Thus for $e > 0$, equation (\ref{eq:rec}) gives:
\[
[\Syst^1(0,C,e)] = [e] \sum_{b=0}^\infty \L^{b e} [\mM(b, C, e)]
= [e] \left(\sum_{b=0}^{b < (g-1)/e} \L^{be} [A^{[g-1-be]}] + \L^{g-1} \sum_{b = (g-1)/e} b^2
\right).
\]
Now we treat the case of negative Euler characteristic.
There are morphisms
\begin{align*}
\pi_+: \Syst^1(0,C,e)&\to \mM(0,C,e), (s,E)\mapsto E,\\
\pi_-:\Syst^1(0,C,-e)&\to \mM(0,C,e), (s,F)\mapsto {\mathcal E}xt^1_{\oO_S}(F,\omega_S)={\mathcal H}om(F,\omega_D),\end{align*}
where in the second line $D$ is the support curve of $F$. By Serre duality on the support curve we get
$[\pi_+^{-1}(E)]-\L^e[\pi_-^{-1}(E)]=[e]$ for all $E\in \mM(0,C,e)$. Furthermore $\mM(0,C,e)$ is stratified into locally closed subsets over which
$\pi_+$ and $\pi_-$ are projectivizations of vector bundles. This gives
\[[\Syst^1(0,C,e)] - \L^e [\Syst^1(0,C,-e)] = [e][\mM(0,C,e)]=0.\]  
\begin{REF} Referee does not understand how this should follow from the proposition\end{REF}
\begin{LG} Note: it is Prop 42. so that I fixed. On the other hand I think one should explain better how this follows from the Proposition. Indeed it looks more like a duality, and it feels more that it somehow follows from the proof of Prop 42, but maybe not directly from the statement.\end{LG}
\begin{VS} Let's just drop the citation to prop 42 there and give the direct
explanation instead: the two things being subtracted both map to the
thing in the middle; the first in the obvious way and the second by
$Ext^1_{on the surface}(\omega_{surface}, . )$ or, what is the same,
$Hom_{on the supporting curve}(\omega_{curve}, . )$.  The fibres are
projective spaces.  over any given point, if I write the same
expression for the difference in the fibres, I get $[e]$ by Serre
duality on the support curve (or I guess just as well on the surface).
Passing to hodge polynomial (or for that matter motive since these
projective bundles are in fact proj of vector bundles) gives the
stated equality.
\end{VS}

We now pass from $[\cdot]$ to $\overline{\chi}_{-y}$; note that in addition to specializing
parameters we must multiply by $y^{-\dim /2}$; by the isomorphism with Hilbert schemes,
we have $\dim \Syst^1(0,C,e) = 2(g-1) + e - 1$.  All terms containing
$[A^{[\ge 1]}]$ vanish, leaving:

\begin{eqnarray*}
\sum_{e \ne 0} t^{e} \overline{\chi}_{-y} (\Syst^1(0,C,e)) 
& = &  y^{1/2} \sum_{e | (g(C)-1)}    \frac{y^e-1}{y-1} (y^{-e/2} t^e + y^{e/2} y^{-e} t^{-e}) \left(\frac{g-1}{e}\right)^2 \\
& = &  \sum_{e | (g(C)-1)}   \frac{y^{e/2}-y^{-e/2}}{y^{1/2}-y^{-1/2}} (t^e + t^{-e}) \left(\frac{g-1}{e} \right)^2.
 \end{eqnarray*}

 It remains to determine the contribution of sheaves with $e=0$.  
 Let $g = g(C)$, and 
 \[f_g(y,t):=\sum_{e\in \Z} \overline \chi_{-y} (\Syst^1(0,C,e)) t^e=\sum_{e\in \Z} \overline \chi_{-y} ({\mathcal C}^{[e+g-1]}_{[A,C],g-2})t^e. \]
 \begin{REF} there was an extra $]$ in the formula: fixed\end{REF}
Thus by  \eqref{eq:Ni} the $N^i$ are defined by expanding
$$f_g(y,t)=\sum_{i \ge 0}N^i_{[A,C],g-2}(y)  \left(\frac{(1-y^{-1/2}t)(1-y^{1/2}t)}{t}\right)^{g-1-i}.$$
We have shown in \cite{GS} that, for surfaces with trivial canonical bundle,
$N^i_{[S,L],\delta}(y)=0$ for $i>\delta$. Applying this here, 
we see that $f_g(y,t)$ is a Laurent polynomial in $t,y^{1/2}$, divisible by 
$\frac{(1-y^{-1/2}t)(1-y^{1/2}t)}{t}$. In particular $f_g(y,y^{1/2})=0$, in other words
$$\Coeff_{t^0}(f_g(y,t))=-\sum_{e | (g(C)-1)}   \frac{y^{e/2}-y^{-e/2}}{y^{1/2}-y^{-1/2}} (y^{e/2} + y^{-e/2}) \left(\frac{g-1}{e} \right)^2,$$ and 
$$f_g(y,t)=\sum_{e | (g(C)-1)}   \frac{y^{e/2}-y^{-e/2}}{y^{1/2}-y^{-1/2}} (t^e + t^{-e}-y^{e/2}-y^{-e/2}) \left(\frac{g-1}{e} \right)^2.$$
 Putting $d:=e$, $n:=(g-1)/e$, we see that $\A=\sum_{g\ge 2} f_g(y,t)q^{g-1}$ is given by
 the second line  of \thmref{thm:A}.
 
 \begin{rem}
For the Hodge polynomial $\overline h(X)=(xy)^{-dim(X)/2} \sum_{p,q} \text{h}^{p,q}(X)(-x)^p(-y)^q$, the above argument gives \begin{align*}
\sum_{e \ne 0} t^{e} \overline h (\Syst^1(0,C,e)) 
& =  \sum_{e>0}\frac{(xy)^{e/2}-(xy)^{-e/2}}{(xy)^{1/2}-(xy)^{-1/2}}\\
& \Bigg( \overline h(A^{[g-1]}) t^e+(t^{e}+t^{-e})\Bigg(\sum_{0<b<(g-1)/e} \overline h(A^{[g-1-be]})+\sum_{b=(g-1)/e}  b^2\Bigg)\Bigg).
\end{align*}
 \end{rem}

\section{The refined invariants  for surfaces with $K_S$ numerically trivial.}

In this section we prove \corref{numconj} and \thmref{higher}. Let 
$$x:=t+t^{-1}-y^{1/2}-y^{-1/2}=\frac{1}{t}(1-ty^{1/2})(1-t y^{-1/2})=-y^{-1/2}(1-y^{1/2}t)(1-y^{1/2}/t).$$
By \thmref{thm:genf}  it is enough to prove \corref{numconj} for K3 surfaces and Abelian surfaces. 
Let $S$ be a K3 surface or an Abelian surface with $Pic(S)=\Z$ generated by a line bundle  $L_g$  with $g(L_g)=g$.
These exist for any $g\ge 2$: for Abelian surfaces we can take a principally polarized abelian surface with a polarization of type
$(1,g-1)$. For K3 surfaces this result can for instance  be found in \cite[Prop.12]{Ch}, where the surfaces are defined as deformations of embeddings in $\P^g$ of hypersurfaces in $\P^1\times \P^2$ and in $\P^2$-bundles over $\P^1$.

We know $$\sum_{i}N^i_{[S,L_g],\chi(L_g)-1-k}(y)x^{g-i-1}=\Coeff_{q^{g-1}}\left[\X^k\K^{\chi(\oO_S)/2}\A^{1-\chi(\oO_S)/2}\right].$$
We know that $N_{[S,L_g],\chi(L_g)-1-k}^i=0$ for $i>\chi(L_g)-1-k=g-(k+2-\chi(\oO_S))$. 

This means $$\X^k\K^{\chi(\oO_S)/2}\A^{1-\chi(\oO_S)/2}\in x^{k+1-\chi(\oO_S)}\Q[x][q^{-1}][[q]]$$ and 
$$\sum_{g} N_{[S,L_g],\chi(L_g)-1-k}^{\chi(L_g)-1-k}(y)q^{g-1}=\frac{\X^k\K^{\chi(\oO_S)/2}\A^{1-\chi(\oO_S)/2}}{x^{k+1-\chi(\oO_S)}}\Big|_{t=y^{1/2}}.$$
By \thmref{thm:A} and the fact that $[d]_y|_{y=1}=d$, we get 
$$\frac{\A}{x}\Big|_{t=y^{1/2}}=\sum_{n>0,d>0}n^2d[d]^2_y q^{nd}=D\widetilde{DG}_2(y,q).$$
As $\H=D^{-1}\A$, we also see that $\frac{\H}{x}\big|_{t=y^{1/2}}=\widetilde{DG}_2(y,q).$
By \thmref{thm:K}   we get
$(x\K)|_{t=y^{1/2}}=\widetilde \Delta(y,q)$.
As $\X\in \H \cdot(1+\Q[y][[\H]])$, we find that 
$$\frac{\X}{x}\big|_{t=y^{1/2}}=\frac{\H}{x}\big|_{t=y^{1/2}}=\widetilde{DG}_2(y,q).$$
Substituting, we have proven  \corref{numconj}:
$$\sum_{g}N_{[S,L_g],\chi(L_g)-1-k}^{\chi(L_g)-1-k}(y)q^{g-1}=\frac{\widetilde{DG}_2(y,q)^k (D\widetilde {DG}_2(y,q))^{1-\chi(\oO_S)/2}}{\Delta(y,q)^{\chi(\oO_S)/2}}.$$

Now we prove \thmref{higher}. This proof (except the easy \eqref{z2ve}) is due to Don Zagier.
As before let $e^{z}:=y$, $e^{z_1}:=y_1:=ty^{1/2}$, $e^{z_2}:=y_2=y^{1/2}/t$. Note that $y_1y_2=y$.
We can rewrite the formula of \thmref{thm:K} as follows:
\[
\K = 
\frac{y^{-1/2}-y^{1/2}}{\Delta(q)}\frac{\theta'(0)^3}{\theta(y_1)\theta(y_2)\theta(y)}=\frac{1}{(y^{-1/2}-y^{1/2})\widetilde \Delta(y,q)} 
\frac{\theta'(0)\theta(y_1y_2)}{\theta(y_1)\theta(y_2)}.
\]
Let $\ve:=\frac{(y_1-1)(y_2-1)}{y-1}=\frac{x}{y^{-1/2}-y^{1/2}}.$ Then by \eqref{zagfun} we have
\begin{equation}\label{Kdel}
x\widetilde\Delta(y,q)\K  = 1-\ve \sum_{nd>0} \sgn(d) y_2^{n-d}y^dq^{nd}=
1-\ve \sum_{nd>0} \sgn(d)e^{-(d-n)z_2}y^dq^{nd}.
\end{equation}
\begin{REF} there was a $\sgn(d)$ missing: fixed.\end{REF}
Note that $\ve=\ve(z_2)=\frac{1}{y-1}(ye^{-z_2}-1)(e^{z_2}-1)$
is a power series in $\Q[\frac{1}{y-1}][[z_2]]$, starting with $z_2$.
Let $z_2(\ve)$ be the inverse series. Then the formula for $\K$  of the Theorem follows from \eqref{Kdel} together with the claim that
\begin{equation}\label{z2ve}
z_2(\ve)= \sum_{n\ge 0} \frac{s_{n}(y)}{(y^{1/2}-y^{-1/2})^n}\frac{\ve^{n+1}}{n+1}=-S(y,x).
\end{equation}
By the Lagrange inversion formula $\Coeff_{\ve^n} z_2=\frac{1}{n} \Res_{z_2=0}\ve^{-n}$, we need to see that 
$$(y-1)^n\Res_{z_2=0}\ve^{-n-1}=\Coeff_{x^n}\big((1+x)(1+xy)\big)^n.$$
We put $Y:=(y-1)$, $T=e^{z_2}-1$, then
\begin{align*}
(y-1)^n\Res_{z_2=0}\ve^{-n-1}&=\Coeff_{T^n}\frac{Y^{2n+1}(1+T)^n}{(Y-T)^{n+1}}=
\Coeff_{T^n}\frac{(1+TY)^n}{(1-T)^{n+1}}=\sum_{l=0}^n\binom{n}{l}\binom{2n-l}{n} Y^l.
\end{align*}
On the other hand
$$\Coeff_{x^nY^l}\big((1+x)(1+x(1+Y))\big)^n
=\sum_{k=0}^n\binom{n}{k}^2\binom{k}{l}=\sum_{k=0}^n\binom{n}{k}\binom{n-l}{n-k}\binom{n}{l}=
\binom{2n-l}{n}\binom{n}{l}.
$$
This establishes \eqref{z2ve} and thus the formula for $\K$. To prove the formula for $\A$, note that 
by the definition 
$A(y,q)=\sum_{nd>0} \sgn(d) n^2 (y^d-1) q^{nd}$. Thus we have by \thmref{thm:A} that
\begin{align*}(y^{1/2}&-y^{-1/2})\mathbb{A}  =A\left(y_1,q\right)+A\left(y_2,q \right)-A(y,q)=
\sum_{nd>0} \sgn(d)n^2\big(y_1^d+y_2^d-(y^d-1)\big) q^{nd}\\
&=\sum_{nd>0} \sgn(d)n^2\big(y_1^d-y_2^{-d}-(y^d-1)\big) q^{nd}=
\sum_{nd>0} \sgn(d)n^2 (e^{-z_2 d }-1)(y^d-1) q^{nd},\end{align*}
and use again \eqref{z2ve}. This finishes the proof of  the formula for $\mathbb{A}$. 
The formula for $\H$ now follows directly from the definition $\H=D^{-1}\mathbb{A}$. 
\begin{REF} Explain why there are line bundles of genus $g$ for all $g$ (in a suitable place).\end{REF}
\begin{LG} I made some changes: \corref{numconj} needs only to be proved for the case of K3 and Abelian surfaces, where the existence of the line bundles is clear.
\end{LG}
\begin{REF} Change citations like [Yos01], and [PT3]: fixed\end{REF}
\begin{LG} We should more generally check all the references to [GS]. I suggest that we take the numbering from the G and T version.
\end{LG}
\begin{REF} There are a number of displayed formulas where the puctuation is missing afterwards.
\end{REF}
\begin{LG} Went through it and fixed it.
\end{LG}
\begin{LG} Updated the references
\end{LG}

\end{document}